\documentclass[a4paper,reqno]{amsart}
\usepackage{amsmath,amssymb,amsthm,graphicx,mathrsfs,url}
\usepackage[usenames,dvipsnames]{color}
\definecolor{darkred}{rgb}{0.4,0.1,0.1}
\definecolor{darkblue}{rgb}{0.1,0.1,0.4}

\usepackage{tikz}
\usetikzlibrary{datavisualization}
\usetikzlibrary{datavisualization.formats.functions}
%

\usepackage[normalem]{ulem}

\numberwithin{equation}{section}
\theoremstyle{plain}

\newtheorem{theorem}{Theorem}[section]
\newtheorem{lemma}[theorem]{Lemma}

\newtheorem{lm}[theorem]{Lemma}
\newtheorem{proposition}[theorem]{Proposition}
\newtheorem{corollary}[theorem]{Corollary}

\theoremstyle{remark}
\newtheorem{remark}[theorem]{Remark}

\theoremstyle{definition}
\newtheorem{example}[theorem]{Example}

\newtheorem{assumption}[theorem]{Assumption}

\newtheorem{conjecture}[theorem]{Conjecture}

\newcommand\cD{\mathcal D}

\newcommand\cP{\mathcal P}

\newcommand\eps{\varepsilon}

\definecolor{darkgreen}{rgb}{0.1,0.45,0.1}
\definecolor{darkblue}{rgb}{0.1,0.1,0.4}
\definecolor{darkgrey}{rgb}{0.5,0.5,0.5}

\definecolor{darkred}{rgb}{0.6,0.0,0.0}

\newcommand\void[1]{}

\def\eps{\varepsilon}



\def\st{\mathfrak t}

\def\cD{{\mathcal D}}

\def\cP{{\mathcal P}}   \def\cQ{{\mathcal Q}}   
   \def\cT{{\mathcal T}}

\def\R{\mathbb{R}}

\def\N{\mathbb{N}}

\renewcommand{\div}{\mathrm{div}\,}

\newcommand{\dom}{\mathrm{dom}\,}

\allowdisplaybreaks

\def\dd{{\,\mathrm d}}

\newcounter{counter_a}

\title[Estimates for low Neumann eigenvalues]{Estimates for the lowest Neumann eigenvalues of parallelograms and domains of constant width}

\date{\today} 

\author[C.~L\'ena]{Corentin L\'ena}
\address{Università degli Studi di Padova - DTG\\
Stradella S. Nicola 3\\
 36100 Vicenza  \\
 Italy
 \newline
\indent Università degli Studi di Padova - Dipartimento di Matematica ``Tullio Levi-Civita''\\
via Trieste 63\\
35121 Padova\\
Italy}
 \email{corentin.lena@unipd.it}

\author[J.~Rohleder]{Jonathan Rohleder}
 \address{Matematiska institutionen \\ Stockholms universitet \\
 106 91 Stockholm \\
 Sweden}
 \email{jonathan.rohleder@math.su.se}
 
 \subjclass{Primary 35P15; Secondary 35J20, 49Q10}

\keywords{Laplace operator, Neumann boundary conditions, eigenvalue inequality, polygonal domain, Lipschitz domain}

\begin{document}

\begin{abstract}
We prove sharp upper bounds for the first and second non-trivial eigenvalues of the Neumann Laplacian in two classes of domains: parallelograms and domains of constant width. This gives in particular a new proof of an isoperimetric inequality for parallelograms recently obtained by A.~Henrot, A.~Lemenant and I.~Lucardesi.
\end{abstract}

\maketitle

\section{Introduction}

We are concerned in this article with planar domains $\Omega$, that is, open, bounded and connected subsets of $\R^2$. We always assume that $\Omega$ is a Lipschitz domain, and we consider the sequence 
\begin{equation*}
	0=\mu_1(\Omega)<\mu_2(\Omega)\le\mu_3(\Omega)\le\dots,
\end{equation*}
consisting of the eigenvalues for the Neumann Laplacian, counted with multiplicities. We recall that the corresponding eigenvalue problem is 
\begin{equation*}
	\left\{\begin{array}{ll}
					-\Delta u=\mu u &\mbox{in }\Omega,\\
				    \frac{\partial u}{\partial \nu}=0&\mbox{on }\partial \Omega,
				\end{array}
	\right.
\end{equation*}
where $\frac{\partial u}{\partial \nu}$ denotes the outward-pointing normal derivative; in general, the derivative on $\partial \Omega$ in the direction of $\nu$ is defined in a weak sense, see Section \ref{sec:prel}.

It was  proved in 1954 by G.~Szeg\H{o} \cite{S54} that, among all simply-connected domains of a given area, the disk is the unique maximizer of $\mu_2(\Omega)$. Equivalently, for simply- connected domains $\Omega\subset\R^2$,
\begin{equation}
	\label{eqIneqSW}
	\mu_2(\Omega)|\Omega|\le \mu_2(\mathbb D)\, \pi,\end{equation}
where $|\Omega|$ denotes the area of $\Omega$ and $\mathbb D$ the unit disk in $\R^2$. We can note that the expression on the left-hand side of \eqref{eqIneqSW} is invariant under scaling of $\Omega$. Inequality \eqref{eqIneqSW} was extended to domains in any dimension, without the assumption of simple connectedness, by H.\,F.~Weinberger in 1956 \cite{W56}. Equality in \eqref{eqIneqSW} is attained only for disks (in higher dimension, only for balls).

Following R.\,L.~Laugesen and B.~Siudeja \cite{LS09}, we investigate how large $\mu_2(\Omega)$ can be when the perimeter is fixed, rather than the area. Equivalently, we look for upper bounds of the product
\[L(\Omega)^2\,\mu_2(\Omega),\]
where $L(\Omega)$ denotes the perimeter of $\Omega$ (this product is also scaling-invariant). As stressed by Laugesen and Siudeja in Problem~9.2 of the paper \cite{LS09} and by Laugesen in Problem~3, page 405 of the proceedings \cite{WBAL09}, this product is not maximized by disks. Indeed, the known formulas for the Neumann eigenvalues of the unit disk $\mathbb D$ give
\begin{equation*}
	4\pi^2\,\mu_2(\mathbb D)<16\pi^2,
\end{equation*}
while
\begin{equation*}
	L(\Omega)^2\,\mu_2(\Omega)=16\pi^2
\end{equation*}
when $\Omega$ is either a square or an equilateral triangle. In addition, Laugesen and Siudeja proved that 
\begin{equation*}
	L(\cT)^2\mu_2(\cT)\le16\pi^2,
\end{equation*}
for any triangle $\cT$, with equality only when $\cT$ is equilateral \cite[Theorem 3.1]{LS09}.  The question asked  by Laugesen  in \cite[p. 405, Problem 3]{WBAL09} immediately suggests the following conjecture.
\begin{conjecture} \label{conj}
For any convex domain $\Omega$ in $\R^2$, 
		\begin{equation*}
				L(\Omega)^2\,\mu_2(\Omega)\le 16\pi^2,
		\end{equation*}
and equality is attained only for squares and equilateral triangles. 
\end{conjecture}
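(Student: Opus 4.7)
The plan is to apply the variational characterization
\[
\mu_2(\Omega)\le \sup_{(\alpha,\beta)\ne(0,0)}\frac{\int_\Omega|\nabla(\alpha u_1+\beta u_2)|^2\,dx}{\int_\Omega(\alpha u_1+\beta u_2)^2\,dx},
\]
valid whenever $u_1,u_2\in H^1(\Omega)$ are linearly independent and orthogonal to the constants. This reduces Conjecture~\ref{conj} to constructing such a pair whose associated generalised eigenvalue problem has top eigenvalue at most $16\pi^2/L(\Omega)^2$, with equality forcing $\Omega$ to be a square or an equilateral triangle. The most natural candidates adapted to the perimeter functional are the (centred) harmonic extensions of the boundary data $\cos(2\pi s/L(\Omega))$ and $\sin(2\pi s/L(\Omega))$, where $s$ is arclength on $\partial\Omega$: these minimise frequency on $\partial\Omega$ subject to the perimeter constraint, and the factor $16\pi^2$ should be recovered precisely when the boundary data extend harmonically to affine functions of the Cartesian coordinates, which happens (after a suitable alignment) on squares and equilateral triangles.

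For the two restricted classes treated in this paper my strategy would split accordingly. A parallelogram is an affine image of a rectangle, so pulling back the Neumann eigenfunctions $\cos(\pi x/a)$ and $\cos(\pi y/b)$ of such a rectangle gives explicit trial functions whose Rayleigh quotient depends only on the two side lengths and the skew angle; the inequality then reduces to a two-parameter elementary extremum problem whose unique maximiser is the square. For a domain $\Omega$ of constant width $w$, Barbier's theorem gives $L(\Omega)=\pi w$, so the conjectured bound becomes $\mu_2(\Omega)\le 16/w^2$; the support function $h(\theta)$ then satisfies $h(\theta)+h(\theta+\pi)=w$, and trial functions built from projections of $\Omega$ onto two orthogonal axes (suitably centred) ought to yield a sharp estimate via this rigidity.

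The principal obstacle to the full conjecture is that the two conjectural extremisers are geometrically incompatible: a square has four-fold dihedral symmetry while an equilateral triangle has only three-fold, so no single trial-space argument in the spirit of Szeg\H{o}--Weinberger can be expected to produce the sharp constant together with the correct equality cases. In Weinberger's area-constrained proof the disk serves as a unique extremiser and a single symmetrisation suffices; here, any universal argument must interpolate between two shapes, and the classical rearrangements (Steiner, Schwarz) either destroy convexity or alter the perimeter uncontrollably. A full resolution would therefore likely require either a continuous path argument in shape space that preserves both convexity and perimeter, or a genuinely new two-branch variational principle; this is presumably why the present paper confines its attention to the more tractable classes of parallelograms and domains of constant width.
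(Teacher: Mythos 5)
The statement you were asked about is Conjecture~\ref{conj}, which the paper does not prove and explicitly leaves open; it only \emph{verifies} the inequality on restricted classes (triangles via \cite{LS09}, parallelograms in Theorem~\ref{thm:parallelIsoperimetric}, and certain symmetric convex domains via \cite{HLL22}). Your proposal likewise does not prove the conjecture, and your own closing paragraph correctly identifies why a single Szeg\H{o}--Weinberger-type trial-space argument cannot capture both equality cases; that diagnosis is sound and consistent with the state of the art. So the honest comparison is between your sketches for the restricted classes and what the paper actually does.

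For parallelograms your plan (affine pullback of the rectangle eigenfunctions $\cos(\pi x/a)$, $\cos(\pi y/b)$, then a two-parameter extremum problem) is exactly the first half of the paper's argument, but it is not enough on its own. The resulting bound is the quantity $\lambda_-$ of Theorem~\ref{thm:parallel}, and for a rhombus with side $1$ and angle $\varphi\to 0$ one has $\lambda_- = \frac{\pi^2}{\sin^2\varphi}\left(1-\frac{8}{\pi^2}\cos\varphi\right)\to\infty$ while $L^2=16$ stays bounded, so $L^2\lambda_-\to\infty$ and the cosinoidal trial functions alone cannot give $16\pi^2$ for thin, slanted parallelograms. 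The paper repairs this with a second family of trial functions, the affine-linear ones $\alpha(s-1/2)+\beta(t-1/2)$ pulled back from the unit square, yielding the bound $\eta_-$, and the proof of Theorem~\ref{thm:parallelIsoperimetric} needs a three-case analysis combining $\min\{\lambda_-,\eta_-\}$. Your sketch is missing this second ingredient. For the other class, you have misread the paper's setting: its ``domains of constant width'' are the non-convex strips $\{0<x<\ell,\ g(x)<y<g(x)+d\}$ of Assumption~\ref{ass}, not classical convex bodies of constant width, so Barbier's theorem $L=\pi w$ and support-function rigidity are inapplicable there (and those domains lie outside the scope of the conjecture anyway, being non-convex except for rectangles). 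The paper instead maps the strip to the unit square by a non-affine $C^1$-diffeomorphism and runs the same weighted Rayleigh-quotient machinery, obtaining $\mu_2(\Omega)\le\pi^2/\ell^2$ and the perimeter bound only for the subclass $\mathcal A_\rho$ of Proposition~\ref{prop:NonConvexClass}.
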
 
Besides the work of Laugesen and Siudeja mentioned above, Conjecture \ref{conj} was verified by A.~Raiko for parallelograms subject to certain geometrical restrictions \cite[Theorem 2]{R18}. As shown in \cite[Section 3]{BBG16}, \cite[Proposition 3.3]{HLL22}, \cite[Problem 9.2]{LS09} or \cite{P08}, the convexity hypothesis cannot be removed: one can construct sequences $(\Omega_n)$ of non-convex domains such that  
\[L(\Omega_n)^2\mu_2(\Omega_n)\to+\infty.\]

Motivated by Conjecture \ref{conj}, we find geometric upper bounds of $\mu_2(\Omega)$ and $\mu_3(\Omega)$, for two classes of domains $\Omega$. The first consists of all parallelograms. The second consists, in a certain sense, of domains of constant width, some of which are neither polygonal nor convex. We show that in these classes only the squares, respectively the rectangles, realize equality. As a corollary, we verify Conjecture \ref{conj} for all parallelograms, namely, the product $L(\cP)^2\,\mu_2(\cP)$, with $\cP$ a parallelogram, is maximized only by squares (see Theorem \ref{thm:parallelIsoperimetric} below). 

Our proofs use  Rayleigh's principle, with trial functions constructed from a suitable mapping of the domain onto the unit square. We introduce the necessary tools in Section \ref{sec:prel}, then study parallelograms in Section \ref{sec:parallel} (Theorem \ref{thm:parallel}) and domains of constant width in Section \ref{sec:constantWidth} (Theorem \ref{thm:constantWidth}).   Finally, we sketch in Section \ref{sec:perturbation} a simple perturbation argument that shows the existence of non-convex domains $\Omega$, close to the unit square, satisfying $L(\Omega)^2\mu_2(\Omega)>16\pi^2$.

During the preparation of this manuscript, we became aware of the recent work by A.~Henrot, A.~Lemenant and  I.~Lucardesi \cite{HLL22}. The authors prove the existence of a maximizer in the class of convex domains \cite[Proposition 3.1]{HLL22}. They also verify Conjecture \ref{conj} for all convex domains having two axes of symmetry (not necessarily perpendicular) \cite[Theorem 1.2]{HLL22} and,  as in our Theorem \ref{thm:parallelIsoperimetric}, for all parallelograms \cite[Proposition 4.3]{HLL22}. However, our work uses a different method and leads to new explicit estimates for $\mu_2 (\Omega)$ and $\mu_3(\Omega)$ which, as far as we can tell, cannot be directly deduced from \cite{HLL22}.

\section{Preliminaries}\label{sec:prel}

During the whole article, $\Omega \subset \R^2$ is a bounded, connected Lipschitz domain. The main object of our interest is the Laplacian $- \Delta_{\rm N}$ on $\Omega$ with Neumann boundary conditions. This self-adjoint, non-negative operator can be defined via its quadratic form
\begin{align*}
 H^1 (\Omega) \ni u \mapsto \int_\Omega |\nabla u|^2.
\end{align*}
Its domain consists of all $u \in H^1 (\Omega)$ such that $\Delta u$, taken distributionally, belongs to $L^2 (\Omega)$ and $u$ satisfies the boundary condition $\frac{\partial u}{\partial \nu} |_{\partial \Omega} = 0$ in a weak sense. We denote by
\begin{align*}
 0 = \mu_1 (\Omega) < \mu_2 (\Omega) \leq \dots
\end{align*}
the eigenvalues of $- \Delta_{\rm N}$, counted with multiplicities.

In the course of our investigations, certain auxiliary second-order elliptic differential operators with a weight function will play a role, which we define now. We point out that this approach is not new and appears, e.g., in work by  V.~Gol'dshtein, V.~Pchelintsev and A.~Ukhlov \cite{GU2019,GPU2021} in a much more general setting using the theory of weak quasiconformal mappings and the characterization of composition operators on Sobolev spaces.

For $w : \Omega \to (0, \infty)$ measurable, the space $L^2_w (\Omega)$ consists of all measurable $u : \Omega \to \R$ such that
\begin{align*}
 \|u\|_w^2 := \int_\Omega w (x, y) |u (x, y)|^2 \dd (x, y) < \infty.
\end{align*}
Then $\| \cdot \|_w$ defines a norm, with which $L^2_w (\Omega)$ is a Hilbert space, and we denote by $\langle \cdot, \cdot \rangle_w$ the corresponding inner product. In the rest of this section, let $f:\Omega\to\mathbb R$ be a measurable function such that
\begin{equation*}
 c\le f(x,y)\le C
\end{equation*}
for some fixed constants $0<c\le C$ and for  almost every $(x,y)\in \Omega$.  Moreover, we denote by $A : \overline{\Omega} \to \R^{2 \times 2}$ a continuous matrix function such that $A (x, y)^\top = A (x, y)$ and $A (x, y)$ is a positive  definite matrix for all $(x, y) \in \overline{\Omega}$.

\begin{proposition}
With the above hypotheses, the quadratic form $\st_{A, \Omega}$ in $L^2_{1/f} (\Omega)$ given by
\begin{align*}
 \st_{A, \Omega} [u, v] = \int_{\Omega} \langle A (x, y) \nabla u (x, y), \nabla v (x, y) \rangle \dd (x, y)
\end{align*}
with
\begin{align*}
 \dom \st_{A, \Omega} =H^1(\Omega)
\end{align*}
is symmetric, non-negative (hence semi-bounded below) and closed.
\end{proposition}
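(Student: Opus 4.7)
The plan is to verify the three claimed properties (symmetry, non-negativity, closedness) by reducing them to standard facts about the Sobolev space $H^1(\Omega)$, exploiting the uniform bounds on $f$ and the compactness of $\overline{\Omega}$.

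Symmetry of $\st_{A, \Omega}$ is immediate from the pointwise symmetry $A(x,y)^\top = A(x,y)$: for $u, v \in H^1(\Omega)$ we have $\langle A \nabla u, \nabla v\rangle = \langle \nabla u, A \nabla v\rangle = \langle A \nabla v, \nabla u\rangle$ pointwise, and integration preserves the identity. Non-negativity is likewise immediate from the pointwise positive definiteness of $A$, which gives $\langle A(x,y)\nabla u(x,y), \nabla u(x,y)\rangle \ge 0$ for a.e.\ $(x,y)$.

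The heart of the proposition is closedness, and my plan is to show that the form norm is equivalent to the standard $H^1$ norm, from which closedness follows at once by completeness of $H^1(\Omega)$. To this end I would first use the assumption $c \le f \le C$ to conclude $1/C \le 1/f \le 1/c$, so that
\begin{equation*}
 \tfrac{1}{C}\|u\|_{L^2(\Omega)}^2 \le \|u\|_{1/f}^2 \le \tfrac{1}{c}\|u\|_{L^2(\Omega)}^2,
\end{equation*}
i.e.\ $\|\cdot\|_{1/f}$ and $\|\cdot\|_{L^2(\Omega)}$ are equivalent. Next, since $A$ is continuous on the compact set $\overline{\Omega}$ and takes values in the symmetric positive definite matrices, the smallest and largest eigenvalues of $A(x,y)$ are continuous functions of $(x,y) \in \overline{\Omega}$; thus there exist $0 < \alpha \le \beta < \infty$ such that
\begin{equation*}
 \alpha |\xi|^2 \le \langle A(x,y)\xi, \xi\rangle \le \beta |\xi|^2, \qquad (x,y) \in \overline{\Omega},\ \xi \in \R^2.
\end{equation*}
Integrating over $\Omega$ with $\xi = \nabla u(x,y)$ yields $\alpha \int_\Omega |\nabla u|^2 \le \st_{A, \Omega}[u,u] \le \beta \int_\Omega |\nabla u|^2$.

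Combining the two equivalences, the form norm $\bigl(\st_{A,\Omega}[u,u] + \|u\|_{1/f}^2\bigr)^{1/2}$ is equivalent to the standard $H^1(\Omega)$ norm. Since $H^1(\Omega)$ is complete with respect to its standard norm, $\dom \st_{A,\Omega} = H^1(\Omega)$ is complete with respect to the form norm, which is precisely the definition of closedness of the non-negative symmetric form $\st_{A,\Omega}$. Semi-boundedness from below by $0$ is a consequence of non-negativity. I do not anticipate any real obstacle here; the only delicate point is checking that the lower bound $\alpha > 0$ is genuinely available, but this follows from the continuity of the minimal eigenvalue of $A(\cdot, \cdot)$ on the compact set $\overline{\Omega}$ together with its strict positivity everywhere.
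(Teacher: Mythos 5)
Your proposal is correct and follows essentially the same route as the paper: symmetry and non-negativity read off from the pointwise properties of $A$, and closedness via the equivalence of the form norm with the $H^1(\Omega)$ norm (the paper's Lemma \ref{lmEqu}, which it leaves as "easily deduced"). Your explicit derivation of the two-sided bounds --- from $c \le f \le C$ and from the continuity and positive definiteness of $A$ on the compact set $\overline{\Omega}$ --- correctly supplies the details the paper omits.
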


The symmetry and the non-negativity of $\st_{A, \Omega}$ follow from that of $A(x,y)$. The fact that $\st_{A, \Omega}$ is closed is an immediate consequence of the following lemma, which one can easily deduce from the hypotheses on $f$ and $A$. 

\begin{lm}\label{lmEqu} The norm associated with the form $\st_{A,\Omega}$, defined for $u\in H^1(\Omega)$ by
\[\|u\|_{A,\Omega}^2:=\st_{A,\Omega}[u,u] +\|u\|_{1/f}^2,\]
is equivalent to the  norm of $H^1 (\Omega)$.
\end{lm}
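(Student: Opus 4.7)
The plan is to show two-sided pointwise bounds between the integrands of $\|\cdot\|_{A,\Omega}^2$ and of the standard $H^1(\Omega)$ norm, then integrate. This is essentially a routine consequence of the hypotheses, but it needs one genuinely used fact, namely the compactness of $\overline{\Omega}$.

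First, I would control the matrix part. Since $A:\overline{\Omega}\to\R^{2\times 2}$ is continuous and $A(x,y)$ is symmetric and positive definite for every $(x,y)\in\overline{\Omega}$, the two eigenvalues $\lambda_{\min}(x,y)\le \lambda_{\max}(x,y)$ of $A(x,y)$ depend continuously on $(x,y)$ and are strictly positive on the compact set $\overline{\Omega}$. Therefore there exist constants $0<\alpha\le\beta<\infty$ with
\begin{equation*}
\alpha\,|\xi|^2 \;\le\; \langle A(x,y)\xi,\xi\rangle \;\le\; \beta\,|\xi|^2
\qquad\text{for all }(x,y)\in\overline{\Omega},\ \xi\in\R^2.
\end{equation*}

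Next, the pointwise bounds $c\le f(x,y)\le C$ give $\tfrac{1}{C}\le \tfrac{1}{f(x,y)}\le \tfrac{1}{c}$ for almost every $(x,y)\in\Omega$. Combining these with the matrix bounds and integrating yields, for every $u\in H^1(\Omega)$,
\begin{equation*}
\alpha\int_\Omega|\nabla u|^2\,\dd(x,y) + \tfrac{1}{C}\int_\Omega |u|^2\,\dd(x,y)
\;\le\; \|u\|_{A,\Omega}^2
\;\le\; \beta\int_\Omega|\nabla u|^2\,\dd(x,y) + \tfrac{1}{c}\int_\Omega|u|^2\,\dd(x,y).
\end{equation*}
Setting $m:=\min\{\alpha,1/C\}$ and $M:=\max\{\beta,1/c\}$, one obtains $m\,\|u\|_{H^1(\Omega)}^2\le \|u\|_{A,\Omega}^2\le M\,\|u\|_{H^1(\Omega)}^2$, which is precisely the claimed equivalence.

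There is really no serious obstacle; the only point that deserves attention is justifying the uniform positive lower bound $\alpha>0$ on $\lambda_{\min}$. This is where the hypothesis that $A$ is continuous on the \emph{closed} set $\overline{\Omega}$ (rather than only on $\Omega$) is essential, so that compactness guarantees $\lambda_{\min}$ attains its positive minimum and does not degenerate at the boundary. With this observation in place, the remainder is a direct computation.
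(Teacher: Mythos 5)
Your proof is correct, and since the paper omits the argument entirely (stating only that the lemma ``can easily be deduced from the hypotheses on $f$ and $A$''), your write-up simply supplies the standard reasoning the authors had in mind: uniform two-sided ellipticity bounds for $A$ via continuity and positive definiteness on the compact set $\overline{\Omega}$, combined with the two-sided bounds $c\le f\le C$ for the weighted $L^2$ term. Your emphasis on compactness of $\overline{\Omega}$ as the one non-trivial ingredient is exactly the right point to flag.
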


As described in \cite[Theorem VIII.15]{RS1}, we can associate with the form $\st_{A,\Omega}$ a self-adjoint operator $T_{A, \Omega}$, formally given by 
\begin{align*}
 T_{A, \Omega} u = - f (\cdot) \,\div (A (\cdot) \nabla u) .
\end{align*}
More precisely, we define 
\begin{equation*}
 \dom T_{A, \Omega}= 
  \left\{u\in H^1(\Omega): \exists   v\in L^2_{1/f}(\Omega) 
   ,\, \forall\varphi\in H^1(\Omega),\, t_{A,\Omega}[u,\varphi]=\langle v,\varphi\rangle_{1/f}
\right\},
 \end{equation*}
 and set $T_{A,\Omega}u=v$ for $u\in \dom T_{A, \Omega}$.
 
 In order to give a more concrete description of $\dom T_{A, \Omega}$, we note that for any $u\in H^1(\Omega)$, the mapping 
 \[\varphi\mapsto \st_{A,\Omega}[u,\varphi],\]
restricted to $\varphi\in C^\infty_c(\Omega)$, defines a distribution in $\cD'(\Omega)$ which we denote by $Pu$. Moreover, it follows from Lemma \ref{lmEqu} that $Pu$ belongs to the dual of $H^1(\Omega)$. From this and \cite[Lemma 4.3]{McL}, there exists an element $\gamma_1u$ in $H^{-1/2}(\partial\Omega)$ such that, for all $\varphi\in H^1(\Omega)$,

\begin{equation}\label{eqIBP}
	\langle- Pu ,\varphi\rangle_{\left(H^1(\Omega)\right)'\times H^{1}(\Omega)}+\st_{A,\Omega}[u,\varphi]=\langle \gamma_1 u ,\gamma_0 \varphi\rangle_{H^{-1/2}(\partial\Omega)\times H^{1/2}(\partial\Omega)}.
\end{equation} 
In the previous formula,
\begin{enumerate}
\item $\gamma_0:H^1(\Omega)\to H^{1/2}(\partial \Omega)$ is the usual boundary trace operator,
\item $\langle \zeta,z\rangle_{Z'\times Z}$ denotes the image of an element $z$ in a normed space $Z$ by an element $\zeta$ in the dual $Z'$ (note that $H^{-1/2}(\partial\Omega)=(H^{1/2}(\partial\Omega))')$.
\end{enumerate}
 To understand the meaning of  $\gamma_1u$, let us assume for a moment that $u\in C^1(\overline{\Omega})$. Then $\gamma_1u$ is the co-normal derivative, defined on $\partial\Omega$ by 
 \[(s,t)\mapsto \langle A(s,t)\nabla u(s,t),\nu(s,t)\rangle,\]
where $\nu(s,t)$ denotes the outward-pointing normal unit vector on $\partial \Omega$. We can therefore see $\gamma_1u$ as a generalized co-normal derivative and \eqref{eqIBP} as a generalized Green formula.

The following can be obtained by a standard argument.

\begin{proposition} The self-adjoint operator $T_{A,\Omega}$ can alternatively be defined by
 \[\dom T_{A, \Omega}=  \left\{u\in H^1(\Omega): Pu\in L^2(\Omega)\mbox{ and } \gamma_1u=0\right\}\]
 and
 \[ T_{A, \Omega}u=f\,Pu.\]
\end{proposition}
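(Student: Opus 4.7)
The plan is to prove the two stated inclusions and simultaneously identify the action of $T_{A,\Omega}$, using nothing more than the abstract definition of $T_{A,\Omega}$ through the form and the generalized Green formula \eqref{eqIBP}. The key book-keeping fact throughout is that since $c\le f\le C$, the spaces $L^2_{1/f}(\Omega)$ and $L^2(\Omega)$ coincide as sets with equivalent norms, so $w\in L^2_{1/f}(\Omega)$ iff $w\in L^2(\Omega)$, and for any $g\in L^2(\Omega)$ and $\varphi\in H^1(\Omega)$ we have $\langle g,\varphi\rangle_{L^2}=\langle f\,g,\varphi\rangle_{1/f}$.

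First I would handle the inclusion $\supseteq$. Take $u\in H^1(\Omega)$ with $Pu\in L^2(\Omega)$ and $\gamma_1 u=0$. The generalized Green formula \eqref{eqIBP} gives, for every $\varphi\in H^1(\Omega)$,
\[\st_{A,\Omega}[u,\varphi]=\langle Pu,\varphi\rangle_{(H^1)'\times H^1}=\int_\Omega Pu\,\varphi\,\dd(x,y)=\langle f\, Pu,\varphi\rangle_{1/f},\]
where the second equality uses that $Pu\in L^2(\Omega)$ so the duality pairing reduces to the $L^2$ inner product. Since $v:=f\,Pu\in L^2_{1/f}(\Omega)$, the defining property of $\dom T_{A,\Omega}$ is satisfied; hence $u\in\dom T_{A,\Omega}$ and $T_{A,\Omega}u=f\,Pu$.

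Next I would prove $\subseteq$. Take $u\in \dom T_{A,\Omega}$ with associated $v\in L^2_{1/f}(\Omega)$, so that
\[\st_{A,\Omega}[u,\varphi]=\langle v,\varphi\rangle_{1/f}=\int_\Omega \tfrac{v}{f}\,\varphi\,\dd(x,y)\qquad\text{for all }\varphi\in H^1(\Omega).\]
Restricting to $\varphi\in C^\infty_c(\Omega)$ and using the definition of $Pu$ as a distribution, this identity reads $Pu=v/f$ in $\cD'(\Omega)$; but $v/f\in L^2(\Omega)$ since $1/f$ is bounded, so $Pu\in L^2(\Omega)$ and $f\,Pu=v$. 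Now apply \eqref{eqIBP} to any $\varphi\in H^1(\Omega)$: the left-hand side becomes
\[-\int_\Omega Pu\,\varphi\,\dd(x,y)+\st_{A,\Omega}[u,\varphi]=-\langle v,\varphi\rangle_{1/f}+\langle v,\varphi\rangle_{1/f}=0,\]
so $\langle\gamma_1 u,\gamma_0\varphi\rangle_{H^{-1/2}\times H^{1/2}}=0$ for every $\varphi\in H^1(\Omega)$. Since the trace map $\gamma_0:H^1(\Omega)\to H^{1/2}(\partial\Omega)$ is surjective, this forces $\gamma_1 u=0$. Combining, $u$ belongs to the set on the right-hand side and $T_{A,\Omega}u=v=f\,Pu$.

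I do not expect a serious obstacle here: the statement is essentially an unpacking of the form-domain definition together with the Green identity \eqref{eqIBP}. The only mildly delicate point is the careful translation between the weighted and unweighted $L^2$-pairings and the use of the surjectivity of the Dirichlet trace to pass from a pairing identity against all $\gamma_0\varphi$ to the vanishing of $\gamma_1u$ in $H^{-1/2}(\partial\Omega)$.
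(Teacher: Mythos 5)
Your argument is correct and is exactly the ``standard argument'' that the paper alludes to without writing out: the paper states this proposition with no proof, and your two inclusions, obtained by unpacking the form definition of $T_{A,\Omega}$ and the generalized Green formula \eqref{eqIBP} together with the surjectivity of $\gamma_0$, are the intended route. The only point worth making explicit is that the identity $\langle Pu,\varphi\rangle_{(H^1(\Omega))'\times H^1(\Omega)}=\int_\Omega Pu\,\varphi$ for all $\varphi\in H^1(\Omega)$, which you use in both directions, is really the convention fixing \emph{which} extension of the distribution $Pu$ is used to define $\gamma_1u$ in \eqref{eqIBP} (since $C^\infty_c(\Omega)$ is not dense in $H^1(\Omega)$, this does not follow automatically from $Pu\in L^2(\Omega)$); this choice is the one implicit in the paper's appeal to \cite{McL}, so your proof is consistent with the paper's setup.
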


Next we consider instances of the operator $T_{A, \Omega}$ that arise from a diffeomorphic transformation of the Neumann Laplacian. The following lemma and Corollary~\ref{cor:Rayleigh} below can be found in greater generality in, e.g., \cite[Section~3]{GPU2021} and also in earlier works for Lipschitz transformations (see e.g.\ the review by V.\,I.~Burenkov, P.\,D.~Lamberti and M.~Lanza de Cristoforis \cite{BLLdC06} and references therein). However, to be self-contained we provide a short proof for our setting.

\begin{lemma}\label{lem:trafo}
Let $\Omega, \Omega' \subset \R^2$ be two bounded, connected Lipschitz domains such that there exists a $C^1$-diffeomorphism $\Phi$ which maps $\Omega$ onto $\Omega'$ such that both $\Phi$ and $\Phi^{-1}$ have bounded partial derivatives of order one. Let
\begin{align}\label{eq:A}
 A (s, t) = \left[\frac{1}{\big| \det D \Phi \big|} (D \Phi) (D \Phi)^\top \right] (\Phi^{-1} (s, t)), \quad (s, t) \in \Omega',
\end{align}
where $D \Phi$ denotes the Jacobi matrix of $\Phi$. Then a function $u$ belongs to $H^1 (\Omega')$ if and only if $u \circ \Phi$ belongs to $H^1 (\Omega)$, and in this case
\begin{align}\label{eq:forms}
 \int_{\Omega} |\nabla (u \circ \Phi) (x, y) |^2 \dd (x, y) = \int_{\Omega'} \langle A (s, t) \nabla u (s, t), \nabla u (s, t) \rangle \dd (s, t).
\end{align}
In particular, if we set $f (s, t) = |\det (D \Phi) (\Phi^{-1} (s, t))|$, then the Laplacian $- \Delta_{\rm N}$ in $L^2 (\Omega)$ with Neumann boundary conditions is isomorphic to the operator $T_{A,\Omega'}$ in $L^2_{1/f} (\Omega')$, and their spectra coincide.
\end{lemma}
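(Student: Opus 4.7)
\textbf{Plan for Lemma \ref{lem:trafo}.} The plan is to establish the transformation formula \eqref{eq:forms} first for smooth functions via the chain rule and a change of variables, then pass to $H^{1}$ by density (or directly via weak derivatives), and finally repackage the form identity as a unitary equivalence between the two self-adjoint operators.

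First I would note that the hypothesis on the partial derivatives makes $\Phi$ and $\Phi^{-1}$ bi-Lipschitz, with $|\det D\Phi|$ and its reciprocal bounded on $\overline{\Omega}$, $\overline{\Omega'}$ respectively. In particular the weight $f$ from the statement satisfies $c\le f \le C$ on $\Omega'$, and $A$ is continuous, symmetric and pointwise positive definite on $\overline{\Omega'}$, so the framework of the preceding propositions applies and the self-adjoint operator $T_{A,\Omega'}$ in $L^{2}_{1/f}(\Omega')$ is well defined.

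Next I would establish \eqref{eq:forms} for $u\in C^{1}(\overline{\Omega'})$ by a direct computation: the chain rule gives $\nabla(u\circ\Phi)(x,y)=(D\Phi)^{\top}(x,y)\,(\nabla u)(\Phi(x,y))$, hence
\begin{equation*}
|\nabla(u\circ\Phi)(x,y)|^{2}
=\bigl\langle [(D\Phi)(D\Phi)^{\top}](x,y)\,(\nabla u)(\Phi(x,y)),(\nabla u)(\Phi(x,y))\bigr\rangle,
\end{equation*}
and the change of variables $(s,t)=\Phi(x,y)$, with Jacobian $|\det D\Phi(x,y)|$, converts the integral over $\Omega$ into the integral over $\Omega'$ in \eqref{eq:forms}, producing exactly the factor $1/|\det D\Phi|$ inside $A$. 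The equivalence $u\in H^{1}(\Omega')\iff u\circ\Phi\in H^{1}(\Omega)$ then follows by approximating in $H^{1}(\Omega')$ by smooth functions (or by arguing directly on weak derivatives through testing against $\varphi\circ\Phi^{-1}$ for $\varphi\in C^{\infty}_{c}(\Omega)$) and invoking the bi-Lipschitz bounds on $\Phi$; the converse direction is symmetric with $\Phi^{-1}$ in place of $\Phi$.

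Finally I would define $U:L^{2}(\Omega)\to L^{2}_{1/f}(\Omega')$ by $Uv=v\circ\Phi^{-1}$. The change of variables $(s,t)=\Phi(x,y)$ shows $\|Uv\|_{1/f}=\|v\|_{L^{2}(\Omega)}$, so $U$ is an isometry; surjectivity is clear since $U^{-1}u=u\circ\Phi$. By the first part, $U$ restricts to a bijection between the form domains $H^{1}(\Omega)$ and $H^{1}(\Omega')=\dom\mathfrak{t}_{A,\Omega'}$. Setting $u=Uv$ in \eqref{eq:forms} yields
\begin{equation*}
\int_{\Omega}|\nabla v(x,y)|^{2}\,\dd(x,y)=\mathfrak{t}_{A,\Omega'}[Uv,Uv],
\end{equation*}
so $U$ intertwines the quadratic form of $-\Delta_{\rm N}$ with $\mathfrak{t}_{A,\Omega'}$. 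By the uniqueness of the self-adjoint operator associated with a closed, semibounded quadratic form, this unitary equivalence of forms lifts to a unitary equivalence of the operators $-\Delta_{\rm N}$ and $T_{A,\Omega'}$ via $U$, from which coincidence of the spectra is immediate.

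The main obstacle is the careful justification of the Sobolev bijection $u\mapsto u\circ\Phi$; once the chain rule identity holds on $C^{1}$ functions, this reduces to the standard statement that composition with a bi-Lipschitz map preserves $H^{1}$, with explicit gradient bounds coming from the uniform bounds on $D\Phi$ and $D\Phi^{-1}$. Everything else is bookkeeping with change of variables and the abstract form-operator correspondence already recalled in this section.
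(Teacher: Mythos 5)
Your proposal is correct and follows essentially the same route as the paper's proof: the chain rule identity $\nabla(u\circ\Phi)=(D\Phi)^{\top}(\nabla u)\circ\Phi$ combined with the change of variables $(s,t)=\Phi(x,y)$ to obtain \eqref{eq:forms}, and then the observation that $u\mapsto u\circ\Phi$ identifies the quadratic forms (and hence the operators) of $-\Delta_{\rm N}$ and $T_{A,\Omega'}$. The paper is simply more terse, stating the isomorphism of forms directly rather than spelling out the unitary $U$ and its isometry, which you verify explicitly.
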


\begin{proof}
Let $u \in H^1 (\Omega')$. As $\Phi$ maps $\Omega$ onto the bounded domain $\Omega'$, $\Phi$ is bounded. Moreover, by assumption, $\Phi$ has bounded partial derivatives. Hence $u \circ \Phi \in H^1 (\Omega)$. Moreover,
\begin{align*}
 \int_{\Omega} |\nabla (u \circ \Phi) (x, y) |^2 \dd (x, y) & = \int_\Omega |(D \Phi)^\top (x, y) (\nabla u) (\Phi (x, y))|^2 \dd (x, y) \\
 & = \int_{\Omega'} \frac{1}{|\det D \Phi|} |(D \Phi)^\top (\Phi^{-1} (s, t)) (\nabla u) (s, t)|^2 \dd (s, t) \\
 & = \int_{\Omega'} \langle A (s, t) \nabla u (s, t), \nabla u (s, t) \rangle \dd (s, t).
\end{align*}
Conversely, by analogous reasoning, for $v \in H^1 (\Omega)$ the function $u = v \circ \Phi^{-1}$ belongs to $H^1 (\Omega')$. In particular, the mapping $H^1 (\Omega') \ni u \mapsto u \circ \Phi \in H^1 (\Omega)$ provides an isomorphism between the quadratic forms corresponding to the operators $T_{A, \Omega'}$ in $L^2_{1/f} (\Omega')$ and $- \Delta_{\rm N}$ in $L^2 (\Omega)$. Hence, the two operators are isomorphic. 
\end{proof}

A for us important consequence of Lemma \ref{lem:trafo} is the following.

\begin{corollary}\label{cor:Rayleigh}
Let $\Omega, \Omega' \subset \R^2$ be two bounded, connected Lipschitz domains such that there exists a $C^1$-diffeomorphism $\Phi$ which maps $\Omega$ onto $\Omega'$ such that both $\Phi$ and $\Phi^{-1}$ have bounded partial derivatives of order one. Moreover, assume that $A$ and $f$ are defined as in Lemma \ref{lem:trafo}. Then
\begin{align*}
 \mu_k (\Omega) = \min_{\substack{F \subset H^1 (\Omega') \\ \dim F = k}} \max_{\substack{u \in F \\ u \neq 0}} \frac{\int_{\Omega'} \langle A (s, t) \nabla u (s, t), \nabla u (s, t) \rangle \dd (s, t)}{\int_{\Omega'} \frac{1}{f (s, t)} |u (s, t)|^2 \dd (s, t)}
\end{align*}
holds for all $k \in \N$. In particular,
\begin{align}\label{eq:varTransformed}
 \mu_2 (\Omega) = \min_{\substack{u \in H^1 (\Omega') \setminus \{0\} \\ \int_{\Omega'} \frac{1}{f} u = 0}} \frac{\int_{\Omega'} \langle A (s, t) \nabla u (s, t), \nabla u (s, t) \rangle \dd (s, t)}{\int_{\Omega'} \frac{1}{f (s, t)} |u (s, t)|^2 \dd (s, t)}.
\end{align}
Moreover, a non-trivial function $u \in H^1 (\Omega')$ which satisfies $\int_{\Omega'} \frac{1}{f} u = 0$ is a minimizer of \eqref{eq:varTransformed} if and only if $u \in \ker (T_{A, f} - \mu_2 (\Omega))$.
\end{corollary}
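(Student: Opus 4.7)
The plan is to reduce the corollary to the classical Courant--Fischer min-max principle by transporting the variational problem from $\Omega$ to $\Omega'$ via the diffeomorphism $\Phi$. I would first invoke the standard min-max characterization
\[
\mu_k(\Omega) = \min_{\substack{F \subset H^1(\Omega) \\ \dim F = k}} \, \max_{v \in F \setminus \{0\}} \frac{\int_\Omega |\nabla v|^2 \dd(x,y)}{\int_\Omega |v|^2 \dd(x,y)},
\]
valid for every $k \in \N$ since $-\Delta_{\rm N}$ is self-adjoint with purely discrete spectrum on $L^2(\Omega)$.

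Next, by Lemma \ref{lem:trafo} the assignment $\Psi : u \mapsto u \circ \Phi$ is a linear bijection from $H^1(\Omega')$ onto $H^1(\Omega)$, and therefore puts $k$-dimensional subspaces of the two spaces in one-to-one correspondence. The numerator of the Rayleigh quotient transforms exactly by identity \eqref{eq:forms}. For the denominator I would apply the substitution $(s,t) = \Phi(x,y)$, which gives $\dd(x,y) = |\det D\Phi(x,y)|^{-1}\,\dd(s,t) = f(s,t)^{-1}\dd(s,t)$ and hence
\[
\int_\Omega |u \circ \Phi|^2 \dd(x,y) = \int_{\Omega'} \frac{1}{f(s,t)} |u(s,t)|^2 \dd(s,t).
\]
Substituting $v = u \circ \Phi$ into the min-max formula above then yields the desired expression for $\mu_k(\Omega)$.

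For the formula \eqref{eq:varTransformed}, I would use the standard reduction that, because the first Neumann eigenspace is spanned by the constants, $\mu_2(\Omega)$ equals the minimum of the Rayleigh quotient over nonzero $v \in H^1(\Omega)$ satisfying $\int_\Omega v = 0$. Under the same change of variables this mean-zero constraint becomes $\int_{\Omega'} f^{-1} u = 0$, which gives exactly \eqref{eq:varTransformed}. That the non-trivial minimizers of \eqref{eq:varTransformed} are precisely the elements of $\ker(T_{A,\Omega'} - \mu_2(\Omega))$ subject to the orthogonality condition is then standard: it follows from the Euler--Lagrange equation for the constrained minimization of the Rayleigh quotient, together with the isomorphism between $-\Delta_{\rm N}$ on $\Omega$ and $T_{A,\Omega'}$ on $L^2_{1/f}(\Omega')$ already established in Lemma \ref{lem:trafo}.

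The only substantive point to check is that the Jacobian factor $\dd(x,y) = f(s,t)^{-1}\dd(s,t)$ is exactly what produces the weight $1/f$ both in the $L^2$ inner product and in the mean-zero constraint; once this bookkeeping is in place, the proof amounts to Courant--Fischer transported through a bijection that preserves the gradient quadratic form (up to the matrix weight $A$) and the $L^2$ inner product (up to the scalar weight $1/f$), so there is no real obstacle beyond being consistent with the weighted spaces introduced in Section \ref{sec:prel}.
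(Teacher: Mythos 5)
Your proof is correct and follows essentially the same route the paper intends: the paper states Corollary \ref{cor:Rayleigh} as an immediate consequence of Lemma \ref{lem:trafo} (without writing out a separate proof), namely transporting the Courant--Fischer min-max principle through the bijection $u \mapsto u \circ \Phi$, with the numerator transforming by \eqref{eq:forms} and the Jacobian producing the weight $1/f$ in the denominator and in the mean-zero constraint. Your write-up simply makes explicit the bookkeeping that the paper leaves implicit.
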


\section{Bounds for low eigenvalues of parallelograms}\label{sec:parallel}

In this section we derive eigenvalue bounds for the lowest non-zero eigenvalues $\mu_2 (\cP)$ and $\mu_3 (\cP)$ of the Neumann Laplacian on any parallelogram $\cP$. Our main result are the following sharp estimates.

\begin{theorem}\label{thm:parallel}
Let $\cP \subset \R^2$ be any parallelogram with side lengths $\ell_1, \ell_2$, area $|\cP|$ and one angle $\varphi$. Without loss of generality, let us assume $\ell_1 \leq \ell_2$. Define
\begin{align*}
 \lambda_\pm = \frac{\pi^2}{2 |\cP|^2} \left( \ell_1^2 + \ell_2^2 \pm \sqrt{\big(\ell_1^2 - \ell_2^2 \big)^2 + \frac{256}{\pi^4} \ell_1^2 \ell_2^2 \cos^2 \varphi} \right)
\end{align*}
and
\begin{align*}
 \eta_\pm = \frac{6}{|\cP|^2} \left( \ell_1^2 + \ell_2^2 \pm \sqrt{\big(\ell_1^2 - \ell_2^2 \big)^2 + 4 \ell_1^2 \ell_2^2 \cos^2 \varphi} \right).
\end{align*}
Then
\begin{align}\label{eq:mu2}
 \mu_2 (\cP) \leq \min \{\lambda_-, \eta_-\}
\end{align}
and 
\begin{align}\label{eq:mu3}
\mu_3 (\cP) \leq \lambda_+. 
\end{align}
In particular,
\begin{align}\label{eq:mean}
 \frac{\mu_2 (\cP) + \mu_3 (\cP)}{2} \leq \frac{\pi^2}{|\cP|^2} \frac{\ell_1^2 + \ell_2^2}{2}.
\end{align}
In \eqref{eq:mu2}  equality holds if, and only if,  $\cP$ is a rectangle, in which case 
\[\mu_2 (\cP)=\lambda_-=\frac{\pi^2}{\ell_2^2}.\]
In \eqref{eq:mu3} and \eqref{eq:mean} equality holds if, and only if, $\cP$ is a rectangle and $\ell_2 \le 2 \ell_1$. In this case, 
\[\mu_2(\cP)=\lambda_-=\frac{\pi^2}{\ell_2^2}\mbox{\ \ \ and \ \ \ }\mu_3(\cP)=\lambda_+=\frac{\pi^2}{\ell_1^2}.\]
\end{theorem}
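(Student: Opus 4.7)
The plan is to apply Corollary~\ref{cor:Rayleigh} with $\Omega = \cP$, $\Omega' = (0,1)^2$, and the affine map $\psi : \Omega' \to \cP$ defined by $\psi(s,t) = (\ell_1 s + \ell_2 t \cos\varphi,\, \ell_2 t \sin\varphi)$; up to a rigid motion, every parallelogram with the prescribed side lengths and angle arises in this way. Because $\psi$ is affine, $D\psi$ is constant, and hence the weight $f$ and the matrix $A$ produced by Lemma~\ref{lem:trafo} are both constant on $\Omega'$: a direct calculation gives $f = 1/|\cP|$ together with $A_{11} = \ell_2/(\ell_1 \sin\varphi)$, $A_{22} = \ell_1/(\ell_2 \sin\varphi)$, and $A_{12} = A_{21} = -\cos\varphi/\sin\varphi$. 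All three estimates then reduce to elementary min--max calculations for a constant-coefficient quadratic form on the unit square.

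For the bounds $\mu_2(\cP) \le \lambda_-$ and $\mu_3(\cP) \le \lambda_+$ I would use the three-dimensional trial subspace $V = \spn\{1, \cos(\pi s), \cos(\pi t)\}$. Its three basis elements are pairwise orthogonal in $L^2_{1/f}(\Omega')$, so the Gram matrix is diagonal, while the matrix of $\st_{A,\Omega'}$ has a zero row and column coming from the constant and a $2 \times 2$ block with diagonal entries $A_{11} \pi^2/2$, $A_{22} \pi^2/2$ and off-diagonal entry $4 A_{12}$ (the factor $4$ arising from $\pi^2 \int_0^1 \sin(\pi s)\, ds \int_0^1 \sin(\pi t)\, dt$). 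Substituting the entries of $A$ above and using $|\cP| = \ell_1 \ell_2 \sin\varphi$, one finds that the two non-trivial generalized eigenvalues of this restricted problem equal $\lambda_\pm$; the standard min--max principle then gives $\mu_k(\cP) \le \sigma_k$ for $k = 1, 2, 3$. The mean inequality \eqref{eq:mean} follows immediately, since $\lambda_- + \lambda_+ = \pi^2(\ell_1^2 + \ell_2^2)/|\cP|^2$.

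The complementary bound $\mu_2(\cP) \le \eta_-$ is obtained by the same scheme with the affine trial space $V' = \spn\{1, s - 1/2, t - 1/2\}$. The basis is again pairwise $L^2_{1/f}$-orthogonal, with $\|s - 1/2\|_{1/f}^2 = |\cP|/12$ and $\st_{A,\Omega'}[s - 1/2, t - 1/2] = A_{12}$; the resulting $2 \times 2$ generalized eigenvalue problem has eigenvalues $\eta_\pm$, so we get the additional upper bound $\mu_2 \le \eta_-$ (the bound $\mu_3 \le \eta_+$ produced here is never useful, as $\eta_+ \ge \lambda_+$ always).

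The main obstacle is the equality analysis. For a rectangle, $\cos\varphi = 0$ makes $A$ diagonal; the formulas collapse to $\lambda_- = \pi^2/\ell_2^2$, $\lambda_+ = \pi^2/\ell_1^2$, $\eta_- = 12/\ell_2^2 > \lambda_-$, and comparison with the explicit spectrum of a rectangle gives equality in \eqref{eq:mu2} for every rectangle and in \eqref{eq:mu3}, \eqref{eq:mean} precisely when $\pi^2/\ell_1^2 \le 4\pi^2/\ell_2^2$, i.e.\ when $\ell_2 \le 2\ell_1$. For non-rectangular $\cP$, strict inequality should be forced because equality would require the minimizer of the restricted Rayleigh quotient to be an actual eigenfunction of $T_{A,\Omega'}$ and, in particular, to satisfy the generalized Neumann condition $(A \nabla u) \cdot \nu = 0$ on $\partial \Omega'$. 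Evaluating this condition on the four sides of the unit square for an element of either $\spn\{\cos(\pi s), \cos(\pi t)\}$ or $\spn\{s - 1/2, t - 1/2\}$ yields a linear system whose only solution is trivial as soon as $A_{12} \ne 0$, i.e.\ whenever $\cP$ is not a rectangle. This closes the equality characterization.
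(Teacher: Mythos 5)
Your proposal is correct and follows essentially the same route as the paper: the same affine change of variables onto the unit square (your $A$ and $f$ agree with the paper's \eqref{eq:Aparallelogram}), the same two families of trial functions $\spn\{1,\cos(\pi s),\cos(\pi t)\}$ and $\spn\{1,s-\tfrac12,t-\tfrac12\}$, and the same $2\times 2$ eigenvalue computations yielding $\lambda_\pm$ and $\eta_\pm$. The only genuine divergence is the equality analysis, where you derive the contradiction for non-rectangles purely from the co-normal boundary condition $(A\nabla u)\cdot\nu=0$ on the four sides (which forces $u\equiv 0$ once $A_{12}\neq 0$, and always for the affine family since $A$ is invertible), whereas the paper works with the interior eigenvalue equation and an orthogonality argument to reduce to a single coefficient; your variant is equally valid and arguably treats the $\mu_2$ and $\mu_3$ cases more uniformly.
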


\begin{remark} \label{remEquality} We can immediately make an observation that will be useful to study the case of equality. If $\lambda_+=\lambda_-$, an examination of the formulas reveals that we must have  $\ell_1=\ell_2$ and $\cos \varphi=0$, so that $\cP$ is a square and $\mu_2(\cP)=\lambda_-=\mu_3(\cP)=\lambda_+=\pi^2/\ell^2$, where $\ell$ is the length of an arbitrary side. Conversely, if $\cP$ is a square of side-length $\ell$, $\mu_2(\cP)=\lambda_-=\mu_3(\cP)=\lambda_+=\pi^2/\ell^2$. 
\end{remark}

\begin{proof}
Let $\cP \subset \R^2$ be the parallelogram spanned by the vectors $(a, b)^\top$ and $(c, d)^\top$, and let $\ell_1 = \sqrt{a^2 + b^2}$ and $\ell_2 = \sqrt{c^2 + d^2}$ be its side lengths; without loss of generality, $\ell_1 \leq \ell_2$. Then the linear transformation given by
\begin{align*}
 \Phi (x, y) = \frac{1}{a d - b c} \begin{pmatrix} d & - c \\ - b & a \end{pmatrix} \binom{x}{y}
\end{align*}
maps $\cP$ onto the unit square $\cQ := (0, 1)^2$. The constant matrix $A$ associated with $\Phi$ in \eqref{eq:A} is then given by
\begin{align}\label{eq:Aparallelogram}
 A = \frac{1}{|a d - b c|} \begin{pmatrix} c^2 + d^2 & - a c - b d \\ - a c - b d & a^2 + b^2 \end{pmatrix} = \frac{1}{|\cP|} \begin{pmatrix} \ell_2^2 & - \ell_1 \ell_2 \cos \varphi \\ - \ell_1 \ell_2 \cos \varphi & \ell_1^2 \end{pmatrix},
\end{align}
where $\varphi$ is the angle of $(a, b)^\top$ towards $(c, d)^\top$ and $|\cP|$ denotes the area of $\cP$. For using Corollary \ref{cor:Rayleigh}, note that, in the notation of the corollary, $f (s, t) = |a d - b c|^{-1} = |\cP|^{-1}$ constantly. 

In order to obtain eigenvalue estimates, we use two types of trial functions. Firstly, we consider the function
\begin{align}\label{eq:cosTest}
 u (s, t) = \alpha \cos (\pi s) + \beta \cos (\pi t), \quad (s, t) \in \cQ,
\end{align}
where $\alpha$ and $\beta$ are arbitrary real numbers. Note that $u$ is an eigenfunction of the Neumann Laplacian on $\cQ$ corresponding to $\mu_2 (\cQ) = \pi^2$ (in particular, $\int_\cQ u = 0 = \int_\cQ \frac{1}{f} u$) and that
\begin{align*}
 \nabla u (s, t) = - \pi \begin{pmatrix} \alpha \sin (\pi s) \\ \beta \sin (\pi t) \end{pmatrix}.
\end{align*}
Then 
\begin{align}\label{eq:formComp}
\begin{split}
 \int_{\cQ} \langle A \nabla u, \nabla u \rangle & = \frac{1}{|\cP|} \int_{\cQ} \Big( \ell_2^2 (\partial_1 u)^2 - 2 \ell_1 \ell_2 \cos \varphi (\partial_1 u) (\partial_2 u) + \ell_1^2 (\partial_2 u)^2 \Big) \\
 & = \frac{\pi^2}{|\cP|} \Big( \ell_2^2 \frac{\alpha^2}{2} - 2 \ell_1 \ell_2 \cos \varphi \frac{4 \alpha \beta}{\pi^2} + \ell_1^2 \frac{\beta^2}{2} \Big) \\
 & = \frac{\pi^2}{2 |\cP|} \left\langle \widetilde A \begin{pmatrix} \alpha \\ \beta \end{pmatrix}, \begin{pmatrix} \alpha \\ \beta \end{pmatrix} \right\rangle,
\end{split}
\end{align}
where the matrix $\widetilde A$ is given by
\begin{align*}
 \widetilde A = \begin{pmatrix} \ell_2^2 & - \frac{8}{\pi^2} \ell_1 \ell_2 \cos \varphi \\ - \frac{8}{\pi^2} \ell_1 \ell_2 \cos \varphi & \ell_1^2 \end{pmatrix}.
\end{align*}
Furthermore,
\begin{align*}
 \int_\cQ \frac{1}{f} |u|^2 = \frac{|\cP|}{2} (\alpha^2 + \beta^2).
\end{align*}
The matrix $\widetilde A$ has the eigenvalues
\begin{align*}
 \frac{|\cP|^2}{\pi^2} \lambda_\pm,
\end{align*}
and we choose corresponding mutually orthogonal eigenvectors $(\alpha_-,  \beta_-)^\top$ and \linebreak $(\alpha_+, \beta_+)^\top$. Let, moreover, $u_\pm (s, t) = \alpha_\pm \cos (\pi s) + \beta_\pm \cos (\pi t)$ be the versions of~\eqref{eq:cosTest} with coefficients corresponding to the chosen eigenvectors of $\widetilde A$. Then we get
\begin{align}\label{eq:Rayleighmu2first}
 \frac{\int_{\cQ} \langle A \nabla u_-, \nabla u_- \rangle}{\int_\cQ \frac{1}{f} |u_-|^2} = \lambda_-
\end{align}
and 
\begin{align*}
 \frac{\int_{\cQ} \langle A \nabla u_+, \nabla u_+ \rangle}{\int_\cQ \frac{1}{f} |u_+|^2} = \lambda_+.
\end{align*}
Applying the min-max principle we get
\begin{align}\label{eq:firstHalfMu2}
 \mu_2 (\cP) \leq \lambda_-
\end{align}
and, noting that $\int_\cQ \frac{1}{f} u_- u_+ = 0$ due to orthogonality of $(\alpha_-, \beta_-)^\top$ and $(\alpha_+, \beta_+)^\top$,
\begin{align}\label{eq:firstHalfMu3}
 \mu_3 (\cP) \leq \lambda_+.
\end{align}
The latter estimates constitute \eqref{eq:mu3} and parts of \eqref{eq:mu2} and yield in their combination \eqref{eq:mean}.

Secondly, consider the function
\begin{align*}
 u (s, t) = \alpha (s - 1/2) + \beta (t - 1/2), \quad (s, t) \in \cQ,
\end{align*}
where $\alpha, \beta$ are again arbitrary real coefficients. Clearly, $\int_\cQ u = 0 = \int_\cQ \frac{1}{f} u$ holds and $\nabla u (x) = (\alpha, \beta)^\top$. Hence,
\begin{align*}
 \int_\cQ \langle A \nabla u, \nabla u \rangle & = \left\langle A \binom{\alpha}{\beta}, \binom{\alpha}{\beta} \right\rangle.
\end{align*}
On the other hand, 
\begin{align*}
 \int_\cQ \frac{1}{f} |u|^2 & = \frac{|\cP|}{12} (\alpha^2 + \beta^2).
\end{align*}
The eigenvalues of the matrix $A$ are given by
\begin{align*}
 \frac{|\cP|}{12} \eta_\pm.
\end{align*}
We choose an eigenvector $(\alpha_-, \beta_-)^\top$ corresponding to $|\cP| \eta_- / 12$ and let $u_- (s, t) = \alpha_- (s - 1/2) + \beta_- (t - 1/2)$. As in the previous case we get
\begin{align}\label{eq:Rayleighmu2second}
 \frac{\int_{\cQ} \langle A \nabla u_-, \nabla u_- \rangle}{\int_\cQ \frac{1}{f} |u_-|^2} = \eta_-.
\end{align}
From this identity we conclude
\begin{align*}
 \mu_2 (\cP) \leq \eta_-.
\end{align*}
The latter together with \eqref{eq:firstHalfMu2} proves \eqref{eq:mu2}.

Let us now consider the cases of equality. The case where $\lambda_+=\lambda_-$ follows immediately from Remark \ref{remEquality}. Hence, we assume in the rest of the proof that $\lambda_-<\lambda_+$, or equivalently that $\cP$ is not a square.

 Let us first assume that equality holds in \eqref{eq:mu2}. Then $\mu_2 (\cP) = \lambda_-$ or $\mu_2 (\cP) = \eta_-$; in the second case, the function $\alpha_- (s - 1/2) + \beta_- (t - 1/2)$ is an eigenfunction of $u \mapsto - f (\cdot) \div (A \nabla u)$ on $\cQ$ with vanishing co-normal derivative corresponding to $\mu_2 (\cP)$, which is impossible, as the left-hand side of the equation $- f (\cdot) \div (A \nabla u) = \mu_2 (\cP) u$ is constantly zero in this case. Therefore we must have $\mu_2 (\cP) = \lambda_-$, and from the equation \eqref{eq:Rayleighmu2first} we then get that $u_- (s, t) = \alpha_- \cos (\pi s) + \beta_- \cos (\pi t)$ is an eigenfunction of $u \mapsto - f (\cdot) \div (A \nabla u)$ on $\cQ$ with vanishing co-normal derivative corresponding to $\mu_2 (\cP)$, i.e.\
\begin{align*}
 - f (\cdot) \div (A \nabla u_-) = \mu_2 (\cP) u_- \quad \text{in}~\cQ, \qquad \langle A \nabla u_-, \nu \rangle = 0 \quad \text{on}~\partial \cQ.
\end{align*} 
The eigenvalue equation is 
\begin{align*}	
	0 & = - f (s,t) \div (A \nabla u_-)(s,t)-\lambda_\pm u_-(s,t) \\
	& = \alpha_-\left(\frac{\pi^2\ell_2^2}{|\cP|^2}-\lambda_-\right)\cos(\pi s)+\beta_-\left(\frac{\pi^2\ell_1^2}{|\cP|^2}-\lambda_-\right)\cos(\pi t)
\end{align*}
for all $(t,s)\in\cQ$.  Since $u_-$ is not zero, one of $\alpha_-$ and $\beta_-$ is not zero. If none of them are zero,
\[\lambda_-=\frac{\pi^2\ell_2^2}{|\cP|^2}=\frac{\pi^2\ell_1^2}{|\cP|^2},\]
so that $\ell_1=\ell_2$ and, from the explicit formula for $\lambda_-$,  $\cos\varphi=0$. Then, $\cP$ is a square, which contradicts our initial assumption. Therefore either $\alpha_-=0$ or $\beta_-=0$, meaning that either $(1,0)^\top$ or $(0,1)^\top$ is an eigenvector of the matrix $\widetilde A$, corresponding respectively to the eigenvalue $\ell_2^2$ or $\ell_1^2$. Using the explicit formula for $\widetilde A$, this implies $\cos\varphi=0$, so that $\cP$ is a rectangle. It is easy to check, however, that in this case $\mu_2 (\cP) = \lambda_-$ is true.
 
Next, let us assume that equality holds in \eqref{eq:mu3}. Then the variational characterization of Corollary \ref{cor:Rayleigh} yields that there exists a linear combination $u = \gamma u_- + \delta u_+$ of $u_-$ and $u_+$ which is an eigenfunction of $u \mapsto - f (\cdot) \div (A \nabla u)$ on $\cQ$ with vanishing co-normal derivative corresponding to $\lambda_+ = \mu_3 (\cP)$; in particular,
\begin{align*}
 \int_\cQ \langle A \nabla u, \nabla u\rangle = \lambda_+ \int_\cQ \frac{1}{f} |u|^2.
\end{align*}
Since, similarly to \eqref{eq:formComp},
\begin{align*}
 \int_\cQ \langle A \nabla u_-, \nabla u_+\rangle = \frac{\pi^2}{2 |\cP|} \left\langle \widetilde A \binom{\alpha_-}{\beta_-}, \binom{\alpha_+}{\beta_+} \right\rangle = \frac{|\cP|}{2} \lambda_+ \left\langle \binom{\alpha_-}{\beta_-}, \binom{\alpha_+}{\beta_+} \right\rangle = 0
\end{align*}
and $\int_\cQ \frac{1}{f} u_- u_+ = 0$, it follows from the previous two formulas that
\begin{align*}
 \lambda_+ \int_\cQ \frac{1}{f} \left( \gamma^2 |u_-|^2 + \delta^2 |u_+|^2 \right) & = \int_\cQ \langle A \nabla u, \nabla u \rangle \\
 & = \lambda_- \int_\cQ \frac{1}{f} \gamma^2 |u_-|^2 + \lambda_+ \int_\cQ \frac{1}{f} \delta^2 |u_+|^2.
\end{align*}
As $\lambda_- < \lambda_+$ by assumption, it follows $\gamma = 0$, i.e., $u_+ (s, t) = \alpha_+ \cos (\pi s) + \beta_+ \cos (\pi t)$ is an eigenfunction of $u \mapsto - f (\cdot) \div (A \nabla u)$ on $\cQ$ with vanishing co-normal derivative corresponding to $\mu_3 (\cP)$. Repeating the argument from the previous case, we obtain that $\cP$ is a rectangle. Then
\[\mu_3(\cP)=\min\left\{\frac{\pi^2}{\ell_1^2},\frac{4\pi^2}{\ell_2^2}\right\},\]
while
\[\lambda_+=\frac{\pi^2}{\ell_1^2}.\]
Since $\mu_3(\cP)=\lambda_+$ by hypothesis, we necessarily have $\pi^2/\ell_1^2\le4\pi^2/\ell_2^2$, that is $\ell_2\le2\ell_1$.

Let us finally assume that equality holds in \eqref{eq:mean}. Since $\mu_2(\cP)\le\lambda_-$, $\mu_3(\cP)\le \lambda_+$ and 
\[\ \frac{\pi^2}{|\cP|^2} \frac{\ell_1^2 + \ell_2^2}{2}=\frac{\lambda_-+\lambda_+}2,\]
this implies $\mu_2(\cP)=\lambda_-$ and $\mu_3(\cP)=\lambda_+$, so 
that the previous case applies and we obtain the same conclusion.
\end{proof}

\begin{remark}
Using the affine linear trial functions in the second part of the previous proof one can easily derive the additional estimate $\mu_3 (\cP) \leq \eta_+$ for each parallelogram. However, it is easy to see that the estimate \eqref{eq:mu3} is always better, i.e.\ $\lambda_+ < \eta_+$ for each choice of the side lengths $\ell_1, \ell_2$ and the angle $\varphi$. On the other hand, depending on these parameters, one or the other estimate given in \eqref{eq:mu2} may be stronger. This depends on the side lengths and the angle. Roughly speaking, the estimate $\mu_2 \leq \lambda_-$ obtained from cosinoidal trial functions is better as long as $\cP$ is close enough to a square. However, for instance if $\ell_1 = \ell_2 = 1$ and $\varphi = \frac{\pi}{4}$, then 
\begin{align*}
 \lambda_- = \frac{1}{|\cP|^2} \left( \pi^2 - \frac{8}{\sqrt{2}} \right) > \frac{1}{|\cP|^2} \left( 12 - \frac{12}{\sqrt{2}} \right) = \eta_-.
\end{align*}
\end{remark}

\begin{remark}
The estimates in Theorem \ref{thm:parallel} are sharp as they yield the exact eigenvalues in the case of a square and for certain rectangles.
\end{remark}

If $\cP$ is a rhombus, i.e.\ an equilateral parallelogram, then the estimates simplify:

\begin{corollary}
Assume that $\cP$ is a rhombus. If $\ell$ denotes the length of an arbitrary side and $\varphi$ is one of the angles then the estimates
\begin{align*}
\begin{split}
 \mu_2 (\cP) \leq \min & \Bigg\{ \frac{\pi^2}{|\cP|^2} \ell^2 \left( 1 - \frac{8}{\pi^2} |\cos \varphi| \right), \frac{12}{|\cP|^2} \ell^2 \left( 1 - |\cos \varphi| \right) \Bigg\},
\end{split}
\end{align*}
\begin{align*}
 \mu_3 (\cP) \leq \frac{\pi^2}{|\cP|^2} \ell^2 \left( 1 + \frac{8}{\pi^2} |\cos \varphi| \right),
\end{align*}
and
\begin{align*}
 \frac{\mu_2 (\cP) + \mu_3 (\cP)}{2} \leq \frac{\pi^2}{|\cP|^2} \ell^2.
\end{align*}
In each estimate, equality holds if and only if $\cP$ is a square.
\end{corollary}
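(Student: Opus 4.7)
The plan is to obtain this corollary as a direct specialization of Theorem \ref{thm:parallel}, together with a brief geometric observation to settle the equality cases. First I would substitute $\ell_1 = \ell_2 = \ell$ into the formulas defining $\lambda_\pm$ and $\eta_\pm$. The terms $(\ell_1^2 - \ell_2^2)^2$ under both square roots vanish, so the roots reduce to $\frac{16}{\pi^2} \ell^2 |\cos \varphi|$ and $2 \ell^2 |\cos \varphi|$ respectively. A short algebraic simplification then yields
\begin{align*}
 \lambda_\pm = \frac{\pi^2 \ell^2}{|\cP|^2} \left( 1 \pm \frac{8}{\pi^2} |\cos \varphi| \right), \qquad \eta_\pm = \frac{12 \ell^2}{|\cP|^2} \left( 1 \pm |\cos \varphi| \right).
\end{align*}
Plugging these into \eqref{eq:mu2}, \eqref{eq:mu3} and \eqref{eq:mean} of Theorem \ref{thm:parallel} gives the three stated inequalities; for the averaged inequality one just notes that $\lambda_- + \lambda_+ = 2 \pi^2 \ell^2 / |\cP|^2$, so the bound $(\mu_2 + \mu_3)/2 \le (\lambda_- + \lambda_+)/2$ produces the claimed form.

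For the cases of equality, I would invoke the equality discussion of Theorem \ref{thm:parallel}. Equality in the bound for $\mu_2(\cP)$ forces $\cP$ to be a rectangle, and the only rhombus that is also a rectangle is the square. Equality in the bound for $\mu_3(\cP)$ forces $\cP$ to be a rectangle with $\ell_2 \le 2\ell_1$; since a rhombus rectangle is a square, the side-length condition $\ell \le 2\ell$ is automatic, and the conclusion is again that $\cP$ is a square. Finally, equality in the averaged bound forces simultaneous equality in the bounds for $\mu_2$ and $\mu_3$, hence $\cP$ is a square.

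There is essentially no obstacle: the only substantive content lies in recognising that rhombus plus rectangle equals square, which removes the $\ell_2 \le 2\ell_1$ caveat present in the general parallelogram statement. Conversely, for a square of side $\ell$ one has $|\cos\varphi| = 0$, so all three estimates reduce to $\mu_2(\cP) = \mu_3(\cP) = \pi^2/\ell^2$, in agreement with the known spectrum of the Neumann Laplacian on a square, confirming sharpness.
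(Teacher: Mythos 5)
Your proposal is correct and matches the paper's (implicit) argument: the corollary is stated as a direct specialization of Theorem~\ref{thm:parallel}, obtained by setting $\ell_1=\ell_2=\ell$ in $\lambda_\pm$ and $\eta_\pm$ and noting that a rhombus which is a rectangle must be a square, which removes the $\ell_2\le 2\ell_1$ caveat. Your algebra and the treatment of the equality cases are accurate, so nothing further is needed.
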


The following may be compared with \cite[Theorem 3.5]{LS09}, where triangular domains are considered;  note that the quantity on the left-hand side of \eqref{eq:scalingInv} below is scaling-invariant.

\begin{corollary}
Let $\cP$ be any parallelogram and let $S := \sqrt{\ell_1^2 + \ell_2^2}$. Then
\begin{align}\label{eq:scalingInv}
 \frac{\mu_2 (\cP) + \mu_3 (\cP)}{2} \frac{|\cP|^2}{S^2} \leq \frac{\pi^2}{2},
\end{align}
with equality if and only if $\cP$ is a rectangle with $\ell_2 \leq 2 \ell_1$, where $\ell_1 \leq \ell_2$ are its side lengths.
\end{corollary}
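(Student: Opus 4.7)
The plan is to observe that this corollary is an immediate reformulation of the mean estimate \eqref{eq:mean} already established in Theorem \ref{thm:parallel}. Indeed, since $S^2 = \ell_1^2 + \ell_2^2$, the inequality \eqref{eq:mean} can be rewritten as
\begin{align*}
 \frac{\mu_2 (\cP) + \mu_3 (\cP)}{2} \leq \frac{\pi^2}{|\cP|^2} \cdot \frac{S^2}{2},
\end{align*}
and multiplying both sides by the positive quantity $|\cP|^2 / S^2$ yields \eqref{eq:scalingInv}.

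For the equality case, I would simply invoke the corresponding assertion in Theorem \ref{thm:parallel}: equality holds in \eqref{eq:mean} if and only if $\cP$ is a rectangle with $\ell_2 \leq 2 \ell_1$, and the same characterization is preserved by the scaling-invariant rearrangement above. There is no real obstacle here; the only thing worth double-checking is the claim in the statement that the left-hand side is scaling-invariant, which follows at once from the homogeneities $\mu_k(\lambda \cP) = \lambda^{-2} \mu_k(\cP)$, $|\lambda \cP| = \lambda^2 |\cP|$ and $S(\lambda \cP) = \lambda S(\cP)$ under a rescaling by $\lambda > 0$, so that both $|\cP|^2 / S^2$ and $\mu_2(\cP) + \mu_3(\cP)$ scale as $\lambda^2$ and $\lambda^{-2}$ respectively, giving overall invariance. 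The proof therefore amounts to little more than quoting Theorem \ref{thm:parallel}.
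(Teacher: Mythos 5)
Your proposal is correct and matches the paper's (implicit) argument: the corollary is indeed just the estimate \eqref{eq:mean} of Theorem \ref{thm:parallel} rewritten with $S^2 = \ell_1^2 + \ell_2^2$ and multiplied through by $|\cP|^2/S^2$, together with the equality characterization already stated there. The scaling-invariance check is a harmless extra.
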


As a further indication of sharpness, we deduce an isoperimetric inequality from the above spectral estimates. More specifically, we prove that among all parallelograms of fixed perimeter, the square is the only maximizer of $\mu_2 (\cP)$. 

As mentioned in the introduction, the same result was proved recently in \cite[Proposition~4.3]{HLL22}, using a different technique. It complements \cite[Theorem 3.1]{LS09}, which shows that among all triangles $\cT$ of fixed perimeter $L(\cT)$, the equilateral one is the only maximizer of $\mu_2(\cT)$ and gives the same value of $\mu_2 (\cT) L (\cT)^2$ as the square. This partially answers the question, raised by R.~Laugesen in \cite[p. 405, Problem 3]{WBAL09}, of whether those two shapes maximize $\mu_2 (\Omega) L (\Omega)^2$ among all bounded convex domains $\Omega$.

\begin{theorem}\label{thm:parallelIsoperimetric}
Let $\cP \subset \R^2$ be any parallelogram and let $L (\cP)$ denote its perimeter. Then
\begin{align*}
 \mu_2 (\cP) L (\cP)^2 \leq 16 \pi^2.
\end{align*}
Equality holds if and only if $\cP$ is a square.
\end{theorem}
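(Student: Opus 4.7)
The plan is to deduce Theorem~\ref{thm:parallelIsoperimetric} directly from Theorem~\ref{thm:parallel}, using the two upper bounds $\mu_2(\cP)\leq\lambda_-$ and $\mu_2(\cP)\leq\eta_-$. I will show that for every parallelogram $\cP$, at least one of these bounds already implies the isoperimetric inequality $\mu_2(\cP)L(\cP)^2\leq 16\pi^2$. By scale invariance, I may fix $\ell_2=1$; set $t=\ell_1\in(0,1]$ and $c=\cos^2\varphi\in[0,1)$, so that $L(\cP)^2=4(1+t)^2$ and $|\cP|^2=t^2(1-c)$.

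Using the rationalization $a-\sqrt{a^2-b}=b/(a+\sqrt{a^2-b})$ on the definitions of $\lambda_-$ and $\eta_-$, I rewrite the targets $\lambda_- L(\cP)^2\leq 16\pi^2$ and $\eta_- L(\cP)^2\leq 16\pi^2$ as, respectively,
\begin{equation*}
(1+t)^2\Big(4-\tfrac{256}{\pi^4}c\Big)\leq 8(1-c)\Big(1+t^2+\sqrt{(1-t^2)^2+\tfrac{256}{\pi^4}t^2 c}\Big)
\end{equation*}
and
\begin{equation*}
6(1+t)^2\leq\pi^2\Big(1+t^2+\sqrt{(1-t^2)^2+4t^2c}\Big).
\end{equation*}
At $c=0$ the first inequality reduces to $(1+t)^2\leq 4$, which is true for $t\leq 1$, with equality iff $t=1$ (a square). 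As $c\to 1^-$ the second reduces to $3(1+t)^2\leq\pi^2(1+t^2)$, which holds for every $t\in(0,1]$.

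The hard part is to verify that for each fixed $t\in(0,1]$ these two inequalities together cover the whole parameter interval $c\in[0,1)$. I would carry this out by isolating the square root in each inequality, squaring, and reducing to an elementary polynomial inequality in $c$ for fixed $t$. In the rhombus case $t=1$, this analysis is explicit: the first inequality holds iff $c\leq 64/\pi^4$, the second holds iff $\sqrt{c}\geq 12/\pi^2-1$, and the two intervals overlap. I anticipate that analogous thresholds $c_\star(t)$ and $c^\star(t)$ with $c^\star(t)\leq c_\star(t)$ exist for every $t\in(0,1]$, with the verification following from continuity and monotonicity of both sides in $c$.

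For the equality case, suppose $\mu_2(\cP)L(\cP)^2=16\pi^2$. Combined with $\mu_2\leq\min\{\lambda_-,\eta_-\}$ and $\min\{\lambda_-,\eta_-\}L(\cP)^2\leq 16\pi^2$, equality throughout forces $\mu_2=\min\{\lambda_-,\eta_-\}=16\pi^2/L(\cP)^2$. The case $\mu_2=\eta_-$ is excluded by the argument given in the proof of Theorem~\ref{thm:parallel}, since an affine function cannot be an eigenfunction of $u\mapsto -f\,\div(A\nabla u)$ with positive eigenvalue. Hence $\mu_2=\lambda_-$, and Theorem~\ref{thm:parallel} forces $\cP$ to be a rectangle; the $c=0$ analysis above then forces $\ell_1=\ell_2$, so that $\cP$ is a square. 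Conversely, for a square the identity $\mu_2(\cP)L(\cP)^2=16\pi^2$ is immediate.
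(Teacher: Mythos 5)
Your overall strategy is the same as the paper's: combine the two bounds $\mu_2(\cP)\le\lambda_-$ and $\mu_2(\cP)\le\eta_-$ from Theorem~\ref{thm:parallel} and show that, for every parallelogram, at least one of them already yields $\mu_2(\cP)L(\cP)^2\le 16\pi^2$. Your normalization ($\ell_2=1$, $t=\ell_1$, $c=\cos^2\varphi$), the rationalized forms of the two target inequalities, and the endpoint checks at $c=0$, $c\to 1^-$ and $t=1$ are all correct. However, there is a genuine gap: the central claim --- that for \emph{every} $t\in(0,1]$ the set of $c$ where the first inequality holds and the set where the second holds together cover $[0,1)$ --- is only asserted (``I anticipate that analogous thresholds \dots exist''), not proved. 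Continuity and monotonicity in $c$ do not by themselves give the overlap $c^\star(t)\le c_\star(t)$: monotonicity settles the structure of the region for the second inequality (its right-hand side is increasing in $c$), but the right-hand side of the first inequality is a product of a decreasing factor $8(1-c)$ and an increasing square-root factor, so even the interval structure of its admissible set requires an argument, and the quantitative comparison of the two thresholds for $t$ in the range roughly $(\pi/\sqrt 3-1,\,1)$ is precisely where the content of the theorem lies. Checking $t=1$ and the boundary values of $c$ does not substitute for this.

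For comparison, the paper carries out exactly this covering step by a three-case analysis in the coordinates $(a,b)$ of the spanning vector (with $\ell_2=1$): the $\lambda_-$ bound is used when $b>\sqrt{1-64/\pi^4}$, the $\eta_-$ bound when $\sqrt{a^2+b^2}<\pi/\sqrt3-1$, and in the remaining regime ($a>12/\pi^2+1-\pi/\sqrt3$ and $\sqrt{a^2+b^2}\ge\pi/\sqrt3-1$) the $\eta_-$ bound is combined with the nontrivial algebraic estimate
\begin{equation*}
a^2+b^2+1-\sqrt{(a^2+b^2-1)^2+4a^2}\;\le\;2\bigl(\sqrt{a^2+b^2}-a\bigr),
\end{equation*}
derived from $4(a-r)(1-r)^2\le 0$ with $r=\sqrt{a^2+b^2}$. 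None of these three verifications is a routine consequence of monotonicity, so your deferred ``hard part'' must actually be written out before the proof is complete. Your treatment of the equality case is essentially sound (excluding $\mu_2=\eta_-$ via the affine trial function and then invoking the rigidity statement of Theorem~\ref{thm:parallel} to force a rectangle, hence $c=0$ and $t=1$), but it is contingent on the main inequality being established first, so it too rests on the missing step.
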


\begin{proof}
Since the quantity $\mu_2 (\cP) L (\cP)^2$ is scaling-invariant, after possible rotation, reflection and rescaling we may assume that $\cP$ is spanned by the vectors $(a, b)^\top$ and $(1, 0)^\top$, where $a \geq 0, b > 0$, and $a^2 + b^2 \leq 1$. In this case, the estimates in Theorem \ref{thm:parallel} yield
\begin{align}\label{eq:first}
  \mu_2 (\cP) L (\cP)^2 \leq \frac{2 \pi^2}{b^2} (1 + \sqrt{a^2 + b^2})^2 \left( a^2 + b^2 + 1 - \sqrt{\big(a^2 + b^2 - 1 \big)^2 + \frac{256}{\pi^4} a^2} \right)
\end{align}
and
\begin{align}\label{eq:second}
 \mu_2 (\cP) L (\cP)^2 \leq \frac{24}{b^2} (1 + \sqrt{a^2 + b^2})^2 \left( a^2 + b^2 + 1 - \sqrt{\big(a^2 + b^2 - 1 \big)^2 + 4 a^2} \right),
\end{align}
where we have used $\ell_1 = \sqrt{a^2 + b^2}$, $\ell_2 = 1$, $\cos \varphi = a / \ell_1$, $|\cP| = b$, and $L (\cP) = 2 (1 + \sqrt{a^2 + b^2})$.

We distinguish three (not mutually distinct) cases. Firstly, consider the case 
\begin{align}\label{eq:case1}
 b > \sqrt{1 - \frac{64}{\pi^4}} \approx 0.59.
\end{align}
By this assumption we have
\begin{align*}
 (a^2 + b^2 - 1)^2 + \frac{256}{\pi^4} a^2 & = a^4 + 2 a^2 \Big( b^2 - 1 + \frac{128}{\pi^4} \Big) + (b^2 - 1)^2 \\
 & = a^4 + 2 a^2 \Big( 2 b^2 - 1 + \frac{128}{\pi^4} - b^2 \Big) + (b^2 - 1)^2 \\
 & \geq a^4 + 2 a^2 (1 - b^2) + (1 - b^2)^2 \\
 & = (a^2 + 1 - b^2)^2,
\end{align*}
where equality is only possible if $a = 0$. Hence \eqref{eq:first} implies
\begin{align}\label{eq:either}
 \mu_2 (\cP) L (\cP)^2 \leq 4 \pi^2 (1 + \sqrt{a^2 + b^2})^2 \leq 16 \pi^2,
\end{align}
where we have used $a^2 + b^2 \leq 1$; equality in \eqref{eq:either} holds if and only if $a = 0$ and $a^2 + b^2 = 1$, that is, if $(a, b) = (0, 1)$, which is the case that $\cP$ is a square. 

Secondly, let
\begin{align}\label{eq:case2}
 \sqrt{a^2 + b^2} < \frac{\pi}{\sqrt{3}} - 1 \approx 0.81.
\end{align}
Note that 
\begin{align*}
 (a^2 + b^2 - 1)^2 + 4 a^2 & = a^4 + 2 a^2 (b^2 - 1) + (b^2 - 1)^2 + 4 a^2 \\
 & = a^4 + 2 a^2 (b^2 + 1) + (b^2 - 1)^2 \\
 & \geq a^4 + 2 a^2 (1 - b^2) + (1 - b^2)^2 \\
 & = (a^2 + 1 - b^2)^2.
\end{align*}
Then \eqref{eq:second} together with \eqref{eq:case2} gives
\begin{align*}
 \mu_2 (\cP) L (\cP)^2 \leq 48 (1 + \sqrt{a^2 + b^2})^2 < 48 \frac{\pi^2}{3} = 16 \pi^2.
\end{align*}

In the third and final case we assume
\begin{align}\label{eq:case3}
 a > \frac{12}{\pi^2} + 1 - \frac{\pi}{\sqrt{3}} \approx 0.40 \quad \text{and} \quad \sqrt{a^2 + b^2} \geq \frac{\pi}{\sqrt{3}} - 1.
\end{align}
Writing $r = \ell_1 = \sqrt{a^2 + b^2} \leq 1$, note first that
\begin{align*}
 0 & \geq 4 (a - r) (1 - r)^2 = 4 (r - a) \big(r + a + r - a - r^2 - 1 \big) \\
 & = 4 \big( r^2 - a^2 + (r - a)^2 - (r^2 + 1) (r - a) \big) \\
 & = (r^2 + 1)^2 - (r^2 - 1)^2 + 4 (r - a)^2 - 4 (r^2 + 1) (r - a) - 4 a^2,
\end{align*}
and thus
\begin{align*}
 (r^2 + 1)^2 - 4 (r^2 + 1) (r - a) + 4 (r - a)^2 \leq (r^2 - 1)^2 + 4 a^2.
\end{align*}
As the left-hand side equals $(r^2 + 1 - 2 (r - a))^2$ and both sides are positive, we get
\begin{align*}
 r^2 + 1 - 2 (r - a) \leq \sqrt{(r^2 - 1)^2 + 4 a^2}
\end{align*}
or, equivalently,
\begin{align*}
 a^2 + b^2 + 1 - \sqrt{(a^2 + b^2 - 1)^2 + 4 a^2} \leq 2 \big(\sqrt{a^2 + b^2} - a \big).
\end{align*}
From this and \eqref{eq:second} we conclude
\begin{align*}
 \mu_2 (\cP) L (\cP)^2 & \leq \frac{48}{(\sqrt{a^2 + b^2} - a) (\sqrt{a^2 + b^2} + a)} (1 + \sqrt{a^2 + b^2})^2 \big(\sqrt{a^2 + b^2} - a \big) \\
 & = 48 \frac{(1 + \sqrt{a^2 + b^2})^2}{\sqrt{a^2  + b^2} + a}. 
\end{align*}
Applying the assumption \eqref{eq:case3} and $a^2 + b^2 \leq 1$ to the latter estimate yields
\begin{align*}
 \mu_2 (\cP) L (\cP)^2 & < \frac{192}{\frac{12}{\pi^2}} = 16 \pi^2.
\end{align*}
Since one easily observes that each choice of $(a, b)^\top$ with $a \geq 0, b > 0$ and $a^2 + b^2 \leq 1$ satisfies one of the assumptions \eqref{eq:case1}, \eqref{eq:case2} or \eqref{eq:case3}, the proof is complete.
\end{proof}

As a direct consequence, among all parallelograms of fixed area, the square maximizes $\mu_2 (\cP)$.

\begin{corollary}
Let $\cP \subset \R^2$ be any parallelogram and let $|\cP|$ denote its area. Then
\begin{align*}
 \mu_2 (\cP) |\cP| \leq \pi^2.
\end{align*}
Equality holds if and only if $\cP$ is a square.
\end{corollary}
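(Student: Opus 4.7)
The plan is to derive this area-fixed isoperimetric bound as a direct corollary of the perimeter-fixed version established in Theorem \ref{thm:parallelIsoperimetric}, by combining it with the classical isoperimetric inequality for parallelograms, namely $L(\cP)^2 \geq 16 |\cP|$, with equality precisely for the square.

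First, I would verify the auxiliary inequality $L(\cP)^2 \geq 16 |\cP|$ for an arbitrary parallelogram $\cP$ with side lengths $\ell_1, \ell_2$ and angle $\varphi$. Since $L(\cP) = 2(\ell_1 + \ell_2)$ and $|\cP| = \ell_1 \ell_2 \sin \varphi$, the AM--GM inequality gives
\begin{equation*}
  L(\cP)^2 = 4 (\ell_1 + \ell_2)^2 \geq 16 \ell_1 \ell_2 \geq 16 \ell_1 \ell_2 \sin \varphi = 16 |\cP|,
\end{equation*}
with equality in the first step iff $\ell_1 = \ell_2$ and in the second step iff $\sin \varphi = 1$; both conditions together characterize squares.

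Combining this with Theorem \ref{thm:parallelIsoperimetric}, I obtain
\begin{equation*}
  16 \, \mu_2(\cP) \, |\cP| \leq \mu_2(\cP) \, L(\cP)^2 \leq 16 \pi^2,
\end{equation*}
which immediately gives $\mu_2(\cP)\,|\cP| \leq \pi^2$.

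For the equality case, I would argue in both directions. If $\cP$ is a square of side length $\ell$, then $\mu_2(\cP) = \pi^2/\ell^2$ and $|\cP| = \ell^2$, so $\mu_2(\cP)\,|\cP| = \pi^2$. Conversely, equality in the final bound forces equality in both intermediate steps, and in particular equality in Theorem \ref{thm:parallelIsoperimetric}, which by that result already forces $\cP$ to be a square. There is no substantial obstacle here; the only thing to be careful about is to cite the correct equality case in Theorem \ref{thm:parallelIsoperimetric} rather than reproving it.
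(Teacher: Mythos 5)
Your proposal is correct and is essentially the paper's own argument: the paper compares $\cP$ to a square $\cQ$ of the same perimeter, using Theorem \ref{thm:parallelIsoperimetric} to get $\mu_2(\cP)\le\mu_2(\cQ)$ and the fact that $|\cQ|\ge|\cP|$, which is exactly your inequality $L(\cP)^2\ge 16|\cP|$ in disguise. Your version merely makes that isoperimetric step explicit via AM--GM, and your handling of the equality case matches the paper's.
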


\begin{proof}
Let now $\cQ$ denote a square with the same perimeter as $\cP$. Then, by Theorem~\ref{thm:parallelIsoperimetric},
$\mu_2 (\cP) \leq \mu_2 (\cQ)$. As $|\cQ| \geq |\cP|$,
\begin{align*}
 \mu_2 (\cP) |\cP| \leq \mu_2 (\cQ) |\cQ| = \pi^2.
\end{align*}
Moreover, in all these estimates, equality holds if and only if $\cP$ is a square.
\end{proof}

\section{Domains of constant width}\label{sec:constantWidth}

In this section we use the approach of tranforming the Laplacian on a domain into an elliptic operator with a weight function on a square discussed in Section~\ref{sec:prel} to obtain spectral estimates for another class of domains. We make the following assumption.

\begin{assumption}\label{ass}
$\Omega \subset \R^2$ has the form
\begin{align*}
 \Omega = \left\{ (x, y)^\top: 0 < x < \ell, g (x) < y < h (x) \right\},
\end{align*}
where $g, h \in C^1 ([0, \ell])$ are real-valued functions such that $d (x) := h (x) - g (x)$ is uniformly positive on $[0, \ell]$.
\end{assumption}

To map a domain as in Assumption~\ref{ass} onto a square, consider the mapping
\begin{align*}
 \Phi (x, y) = \begin{pmatrix} x / \ell \\ \frac{y - g (x)}{d (x)} \end{pmatrix}, \quad (x, y)^\top \in \Omega.
\end{align*}
It maps $\Omega$ one-to-one onto the square $\cQ := (0, 1)^2$. The Jacobian of $\Phi$ is given by
\begin{align*}
 (D \Phi) (x, y) = \begin{pmatrix} 1/\ell & 0 \\ \frac{- g' (x) d (x) - (y - g (x)) d' (x)}{d (x)^2} & 1 / d (x) \end{pmatrix}.
\end{align*}
Note that $\Phi$ is a $C^1$-diffeomorphism and that all first-order partial derivatives of $\Phi$ and $\Phi^{-1}$ are bounded due to the assumption that $d$ is uniformly positive and bounded. Furthermore,
\begin{align*}
 \det (D \Phi) (x, y) = \frac{1}{\ell d (x)}.
\end{align*}
Then the matrix $A (s, t)$ defined in \eqref{eq:A} is given by
\begin{align*}
 (A \circ \Phi) (x, y) = \begin{pmatrix} \frac{d (x)}{\ell} & \frac{- g' (x) d (x) - (y - g (x)) d' (x)}{d (x)} \\ \frac{- g' (x) d (x) - (y - g (x)) d' (x)}{d (x)} & \frac{\ell}{d (x)} + \frac{\big(g' (x) d (x) + (y - g (x)) d' (x) \big)^2}{d (x)^3} \ell \end{pmatrix}
\end{align*}
for $(x, y)^\top \in \Omega$ and, hence,
\begin{align*}
 A (s, t) = \begin{pmatrix} \frac{d (\ell s)}{\ell} & - g' (\ell s) - t d' (\ell s) \\ - g' (\ell s) - t d' (\ell s) & \frac{\ell}{d (\ell s)} \Big( 1 + \big(g' (\ell s)  + t d' (\ell s) \big)^2 \Big) \end{pmatrix}
\end{align*}
for $(s, t)^\top \in (0, 1)^2$. This transformation can be used to obtain estimates for the eigenvalues of the Neumann Laplacian on $\Omega$. We will now illustrate this at the example of domains of constant width; cf.\ Figure \ref{fig:constWidth}. 
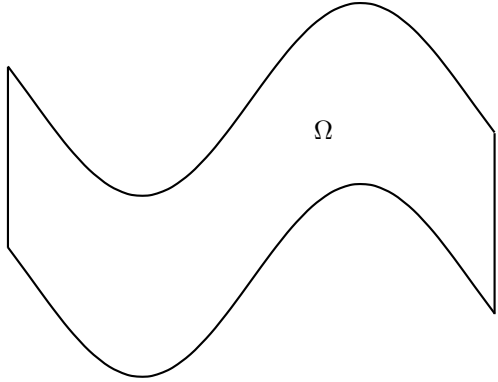
\begin{figure}[h]
    
\begin{tikzpicture}[scale=0.8]
 \draw[white] (1.5,2.5) circle(0.00) node[left,black]{$\Omega$};
 \draw[scale=1,domain=-4:4,smooth,variable=\x,thick] plot ({\x},{1.6*sin(50*\x)});
 \draw[scale=1,domain=-4:4,smooth,variable=\x,thick] plot ({\x},{1.6*sin(50*\x)+3});
 \draw[thick] (-4,0.54) -- (-4,3.54);
 \draw[thick] (4,-0.56) -- (4,2.44);
\end{tikzpicture} 
\caption{A domain of constant width.}
\label{fig:constWidth}
\end{figure}
Let us point out that all these domains are non-convex, except for the rectangle.

\begin{theorem}\label{thm:constantWidth}
Suppose Assumption \ref{ass} holds and $d$ is constant. Define
\begin{align*}
 \lambda_\pm & = \frac{\pi^2}{2 \ell d} \Bigg( \frac{d}{\ell} + \frac{1}{d} \int_0^\ell \big( 1 + g' (x)^2 \big) \textup{d} x \\
 & \qquad \pm \sqrt{\bigg( \frac{d}{\ell} - \frac{1}{d} \int_0^\ell \big( 1 + g' (x)^2 \big) \textup{d} x \bigg)^2 + \frac{64}{\pi^2 \ell^2} \bigg( \int_0^\ell g' (x) \sin (\pi x / \ell) \textup{d} x \bigg)^2} \Bigg).
\end{align*}
Then
\begin{align}\label{eq:lambda}
 \mu_2 (\Omega) \leq \lambda_- \quad \text{and} \quad \mu_3 (\Omega) \leq \lambda_+.
\end{align}
In particular,
\begin{align}\label{eq:mu2ConstantWidth}
 \mu_2 (\Omega) \leq \min \left\{ \frac{\pi^2}{\ell^2}, \frac{\pi^2}{d^2 \ell} \int_0^\ell \big( 1 + g' (x)^2 \big) \dd x \right\}
\end{align}
and
\begin{align}\label{eq:arithmeticMean}
 \frac{\mu_2 (\Omega) + \mu_3 (\Omega)}{2} \leq \frac{\pi^2}{2} \bigg( \frac{1}{\ell^2} + \frac{1}{\ell d^2} \int_0^\ell \big(1 + g' (x)^2 \big) \textup{d} x \bigg)
\end{align}
hold. Moreover, equality in the first estimate in \eqref{eq:lambda} or in \eqref{eq:mu2ConstantWidth} holds if, and only if, $\Omega$ is a rectangle.
\end{theorem}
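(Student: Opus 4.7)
The plan is to apply Corollary~\ref{cor:Rayleigh} to the transformation described at the beginning of this section, mimicking the strategy of Theorem~\ref{thm:parallel}. Since $d$ is constant we have $d'\equiv 0$, so that
\[
 A(s,t) = \begin{pmatrix} d/\ell & -g'(\ell s) \\[0.2em] -g'(\ell s) & \tfrac{\ell}{d}\bigl(1+g'(\ell s)^2\bigr)\end{pmatrix}, \qquad f(s,t)\equiv \tfrac{1}{\ell d},
\]
and the admissibility condition $\int_{\cQ}(1/f)u=0$ from Corollary~\ref{cor:Rayleigh} reduces to $\int_{\cQ} u=0$. I would work on $\cQ=(0,1)^2$ with the two-parameter family of trial functions $u(s,t)=\alpha\cos(\pi s)+\beta\cos(\pi t)$, which automatically satisfy this constraint.

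A direct computation of the Rayleigh quotient, using $\int_0^1 \sin^2(\pi s)\,ds=1/2$, $\int_0^1 \sin(\pi t)\,dt=2/\pi$ and the substitution $x=\ell s$, gives
\[
 \frac{\int_\cQ \langle A\nabla u,\nabla u\rangle}{\int_\cQ \tfrac{1}{f}|u|^2} = \frac{\bigl\langle M(\alpha,\beta)^{\top},(\alpha,\beta)^{\top}\bigr\rangle}{\alpha^2+\beta^2},
\]
where, setting $I := \int_0^\ell g'(x)\sin(\pi x/\ell)\,dx$,
\[
 M = \frac{\pi^2}{\ell d}\begin{pmatrix} d/\ell & -\tfrac{4I}{\pi\ell} \\[0.2em] -\tfrac{4I}{\pi\ell} & \tfrac{1}{d}\int_0^\ell (1+g'(x)^2)\,dx\end{pmatrix}.
\]
A short calculation of the characteristic polynomial identifies the eigenvalues of $M$ as the $\lambda_\pm$ from the statement; choosing corresponding orthogonal eigenvectors $(\alpha_\pm,\beta_\pm)^\top$ and applying the min-max principle from Corollary~\ref{cor:Rayleigh} exactly as in the parallelogram proof yields $\mu_2(\Omega)\le\lambda_-$ and $\mu_3(\Omega)\le\lambda_+$. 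The diagonal entries of $M$ are $\pi^2/\ell^2$ and $\tfrac{\pi^2}{\ell d^2}\int_0^\ell(1+g'(x)^2)\,dx$, and since the smaller eigenvalue of a symmetric matrix is bounded above by each diagonal entry, \eqref{eq:mu2ConstantWidth} follows; summing $\lambda_-+\lambda_+=\tr M$ yields \eqref{eq:arithmeticMean}.

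For the equality statement, suppose $\mu_2(\Omega)=\lambda_-$. By the last assertion of Corollary~\ref{cor:Rayleigh} the minimiser $u_-(s,t)=\alpha_-\cos(\pi s)+\beta_-\cos(\pi t)$ is an eigenfunction of $T_{A,\cQ}$ with eigenvalue $\mu_2(\Omega)$ and vanishing co-normal derivative. Since $A_{11}$ is constant and $A_{12},A_{22}$ depend only on $s$, the equation $-f\div(A\nabla u_-)-\mu_2 u_- \equiv 0$ expands as
\[
 \alpha_-\bigl(\tfrac{\pi^2}{\ell^2}-\mu_2\bigr)\cos(\pi s)-\tfrac{\pi\beta_-}{d}g''(\ell s)\sin(\pi t)+\beta_-\bigl(\tfrac{\pi^2(1+g'(\ell s)^2)}{d^2}-\mu_2\bigr)\cos(\pi t) = 0,
\]
and linear independence of $1,\sin(\pi t),\cos(\pi t)$ on $(0,1)$ splits this into three conditions on $s$. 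Combining these with the co-normal boundary condition on $t=0,1$, which reduces to $\alpha_- g'(\ell s)\sin(\pi s)\equiv 0$, and on $s=0,1$, which gives $\beta_- g'(0)=\beta_- g'(\ell)=0$, a case distinction on whether $\alpha_-$ or $\beta_-$ vanishes forces $g'\equiv 0$ in every sub-case. Hence $g$ is constant and $\Omega$ is a rectangle; the converse is obvious. The equality case in \eqref{eq:mu2ConstantWidth} is treated analogously, with $\cos(\pi s)$ or $\cos(\pi t)$ playing the role of the eigenfunction, and the co-normal condition on the appropriate side directly forcing $g'\equiv 0$.

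The main obstacle I expect is the equality analysis: the interior PDE alone only forces $g$ to be affine, and one then needs the co-normal conditions on the four sides of $\cQ$, which are coupled through the off-diagonal entry $A_{12}(s)=-g'(\ell s)$, to upgrade this to $g'\equiv 0$. The sub-cases where one of $\alpha_-,\beta_-$ vanishes must be treated separately from the case where both are nonzero, and some care is needed to ensure no non-rectangular $\Omega$ sneaks past the combined constraints.
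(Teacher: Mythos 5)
Your proposal follows the same route as the paper: the same change of variables to $\cQ=(0,1)^2$, the same two-parameter cosine trial family, the same matrix $M$ (up to the harmless normalisation $\frac{2}{\ell d}$), the diagonal entries for \eqref{eq:mu2ConstantWidth} and the trace for \eqref{eq:arithmeticMean}, and essentially the same boundary-condition argument in the equality case. The bounds themselves are established correctly.

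The one genuine gap is in the equality analysis: you expand $-f\,\div(A\nabla u_-)$ pointwise and obtain a term $-\tfrac{\pi\beta_-}{d}g''(\ell s)\sin(\pi t)$, but Assumption \ref{ass} only gives $g\in C^1([0,\ell])$, so $g''$ need not exist and the displayed identity, together with the ``split by linear independence of $1,\sin(\pi t),\cos(\pi t)$'' step, is not justified as written. The paper avoids this by staying at the weak level: after the co-normal condition on $t=0$ forces $\alpha_-=0$, it tests the distributional eigenvalue equation against $\varphi(s,t)=\xi(s)\chi_n(t)$ with $\chi_n\to\delta_{1/2}$, concluding that $s\mapsto g'(\ell s)$ has vanishing distributional derivative and is therefore constant; the boundary conditions at $s=0,1$ then give $g'\equiv 0$. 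Your argument reaches the same conclusion, and the fix is routine (interpret $g''=0$ distributionally, which still forces $g'$ constant), but as stated the interior step would fail for a general $C^1$ function $g$.
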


\begin{proof}
As $d$ is constant, in the notation of Lemma \ref{lem:trafo} also $1/f = \ell d$ is constant. Thus the function
\begin{align}\label{eq:eigenfunction}
 u (s, t) = \alpha \cos (\pi s) + \beta \cos (\pi t),
\end{align}
where $\alpha, \beta$ are arbitrary real constants, satisfies $\int_\cQ \frac{1}{f} u = 0$, and
\begin{align*}
 \int_\cQ \frac{1}{f} |u|^2 = \frac{\ell d}{2} (\alpha^2 + \beta^2).
\end{align*}
Moreover,
\begin{align*}
 \int_\cQ \langle A \nabla u, \nabla u \rangle & = \pi^2 \Bigg( \alpha^2 \frac{d}{\ell} \int_0^1 \sin^2 (\pi s) \dd s - 2 \alpha \beta \int_0^1 g' (\ell s) \sin (\pi s) \dd s \int_0^1 \sin (\pi t) \dd t \\
 & \quad + \beta^2 \frac{\ell}{d} \int_0^1 \big( 1 + g' (\ell s)^2 \big) \dd s \int_0^1 \sin^2 (\pi t) \dd t \Bigg) \\
 & = \pi^2 \Bigg( \alpha^2 \frac{d}{2 \ell} - \frac{4\alpha \beta}{\pi\ell} \int_0^\ell g' (x) \sin (\pi x/\ell) \dd x + \frac{\beta^2}{2 d} \int_0^\ell \big(1 + g' (x)^2 \big) \dd x \Bigg) \\
 & = \left\langle M \binom{\alpha}{\beta}, \binom{\alpha}{\beta} \right\rangle,
\end{align*}
where
\begin{align*}
 M = \frac{\pi^2}{2} \begin{pmatrix}
  \frac{d}{\ell} & - \frac{4}{\pi\ell} \int_0^\ell g' (x) \sin (\pi x/\ell) \dd x \\
  - \frac{4}{\pi\ell} \int_0^\ell g' (x) \sin (\pi x/\ell) \dd x & \frac{1}{d} \int_0^\ell \big(1 + g' (x)^2 \big) \dd x
 \end{pmatrix}.
\end{align*}
The matrix $M$ has eigenvalues $\frac{\ell d}{2} \lambda_\pm$, with $\lambda_\pm$ given in the theorem. Choosing $(\alpha, \beta)^\top$ equal to the corresponding eigenvectors and applying Corollary~\ref{cor:Rayleigh} yields the estimates \eqref{eq:lambda}. On the other hand, choosing $(\alpha, \beta)$ equal to the standard basis vectors gives
\begin{align*}
 \mu_2 (\Omega) \leq \pi^2 \frac{d}{2 \ell} \frac{2}{\ell d}
\end{align*}
and
\begin{align*}
 \mu_2 (\Omega) \leq \frac{\pi^2}{2 d} \int_0^\ell \big(1 + g' (x)^2 \big) \dd x \frac{2}{\ell d},
\end{align*}
respectively, which yields \eqref{eq:mu2ConstantWidth}. Finally, the estimate \eqref{eq:arithmeticMean} is a direct consequence of \eqref{eq:lambda}.

It remains to discuss the cases of equality. First we show  that $\mu_2 (\Omega) = \lambda_-$ is only possible if $\Omega$ is a rectangle. Assume for a contradiction that this equality holds for a non-rectangle $\Omega$. Then there exists a nonempty open interval $I \subset (0, \ell)$ on which $g'$ is nowhere vanishing. Moreover, there exists a coefficient pair $(\alpha, \beta) \neq (0, 0)$ such that the function $u$ in \eqref{eq:eigenfunction} is an eigenfunction of $- \frac{1}{\ell d} \div (A (\cdot) \nabla u)$ on $\cQ = (0, 1)^2$ with vanishing co-normal derivative; for $(s, 0) \in \partial \mathcal Q$ the latter read 
\begin{align*}
 \begin{pmatrix} \frac{d}{\ell} & - g' (\ell s) \\ - g' (\ell s) & \frac{\ell}{d} \left( 1 + g' (\ell s)^2 \right) \end{pmatrix} \nabla u \cdot \binom{0}{-1} = 0
\end{align*}
and can be written
\begin{align*}
 \pi g' (\ell s) \alpha \sin (\pi s) = 0.
\end{align*}
Choosing $\ell s \in I$ this implies $\alpha = 0$, i.e.\ $u (s, t) = \beta \cos (\pi t)$. 
Let us now show that the eigenvalue equation,  of which $u$ is a distributional solution, implies that $g$ is linear. To simplify notation, we temporarily set
\begin{align*}
			a(s):=&-\frac{1}{\ell d}g'(\ell s);\\
			b(s):=&\frac{1}{d^2}\left(1+g'(\ell s)^2\right).
\end{align*}  
Then, for any $\varphi\in C_c^\infty(\mathcal Q)$, 
\[\beta\int_{\mathcal Q}\left(\pi\sin(\pi t)\left(a(s)\partial_s\varphi(s,t)+b(s)\partial_t \varphi(s,t)\right)+\mu_2(\Omega)\cos(\pi t)\varphi(s,t)\right)\dd(s,t)=0.\]
Integrating by parts,
\[\int_0^1\sin(\pi t)\partial_t\varphi(s,t) \dd t=-\pi\int_0^1\cos(\pi t)\varphi(s,t) \dd t=0.\]
Using Fubini's theorem and the previous equality, we find that for any $\varphi\in C^\infty_c(\mathcal Q)$,
\[\beta\int_{\mathcal Q}\left(\pi\sin(\pi t)a(s)\partial_s\varphi(s,t)+\cos(\pi t)\left(\mu_2(\Omega)-\pi^2b(s)\right)\varphi(s,t)\right)\dd(s,t)=0.\]
We now apply the previous formula to $\varphi(s,t)=\xi(s)\chi_n(t)$, where $\xi$ is an arbitrary function in $C_c^\infty((0,1))$ and $(\chi_n)$ is a sequence in $C_c^\infty((0,1))$ converging to the $\delta$-distribution centered at $t=1/2$. Taking $n\to\infty$, we get that 
\[\beta\int_0^1a(s)\xi'(s)\dd s=0\] 
for any $\xi\in C^\infty_c((0,1))$. Since $\beta$ cannot be $0$, this means that the distributional derivative of the function $a$ is $0$, which implies that $a$ is a constant. Therefore $g'$ is a constant and $g$ is linear.
On the other hand, on the boundary lines $s = 0$ and $s = 1$ the boundary condition gets
\begin{align*}
 \mp \pi g' (\ell s) \beta \sin (\pi t) = 0, 
\end{align*}
$t \in (0, 1)$ and, thus, $g' (0) = 0 = g' (\ell)$; but then the linear function $g$ is constant and $\Omega$ a rectangle, another contradiction.

Now assume that equality holds in \eqref{eq:mu2ConstantWidth} and that $\Omega$ is not a rectangle. Then either $\cos (\pi s)$ or $\cos (\pi t)$ is an eigenfunction of $- \frac{1}{\ell d} \div (A (\cdot) \nabla u)$ with Neumann boundary conditions and a reasoning analogous to the above one leads to a contradiction.

To complete the proof of the theorem, it remains to note that if $\Omega$ is a rectangle, i.e.\ $g$ is constant, then the bounds for $\mu_2 (\Omega)$ in both the first estimate in \eqref{eq:lambda} and \eqref{eq:mu2ConstantWidth} read $\min \{\pi^2/\ell^2, \pi^2/d^2\}$, being equal to the lowest positive eigenvalue of the rectangle of length $\ell$ and width $d$.
\end{proof}

\begin{example}
Although all the domains that are admissible in the theorem have area $\ell d$, the estimate for $\mu_2 (\Omega)$ given in the theorem is not necessarily below $\frac{\pi^2}{\ell d}$, the first eigenvalue of the square of the same area. Consider, for instance, the domain given by
\begin{align*}
 \Omega_\eps = \left\{ (x, y)^\top : 0 < x < \pi, \sin (x) - \Big( \frac{\pi}{2} + \eps \Big) < y < \sin (x) + \Big( \frac{\pi}{2} + \eps \Big) \right\}
\end{align*}
for sufficiently small $\eps > 0$. In this case, $d = \pi + 2 \eps$ and $\ell = \pi$. Moreover, note that
\begin{align*}
 \int_0^\ell g' (x) \sin (\pi x / \ell) \textup{d} x = 0
\end{align*}
in this case. Therefore Theorem \ref{thm:constantWidth} yields
\begin{align*}
 \mu_2 (\Omega_\eps) & \leq \frac{\pi^2}{2 \ell d} \Bigg( \frac{d}{\ell} + \frac{1}{d} \int_0^\ell \big(1 + g' (x)^2 \big) \textup{d} x - \bigg|\frac{d}{\ell} - \frac{1}{d} \int_0^\ell \big(1 + g' (x)^2 \big) \textup{d} x \bigg| \Bigg) \\
 & = \frac{\pi^2}{2 \ell d} \Bigg( \frac{d}{\ell} + \frac{1}{d} \frac{3 \pi}{2} - \bigg|\frac{d}{\ell} - \frac{1}{d} \frac{3 \pi}{2} \bigg| \Bigg).
\end{align*}
Note that for sufficiently small $\eps$ the term inside the modulus is negative and, hence, the estimate yields
\begin{align*}
 \mu_2 (\Omega_\eps) \leq \frac{\pi^2}{\ell^2}.
\end{align*}
Since $\ell < d$, the latter is larger than $\frac{\pi^2}{\ell d}$, the first eigenvalue of the square with the same area as $\Omega$.
\end{example}

Despite the previous example, the estimate \eqref{eq:mu2ConstantWidth} in Theorem \ref{thm:constantWidth} has the following immediate implication.

\begin{corollary}\label{cor:Long}
Suppose that $\Omega$ is a domain of the form as given in Assumption~\ref{ass} with constant $d$ and with $\ell \geq d$. Then
\begin{align}\label{eq:jetztAberLos}
 \mu_2 (\Omega) \leq \frac{\pi^2}{\ell^2}.
\end{align}
Equality holds if and only if $\Omega$ is a rectangle.
\end{corollary}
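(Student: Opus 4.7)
The plan is to derive Corollary \ref{cor:Long} directly from estimate \eqref{eq:mu2ConstantWidth} in Theorem \ref{thm:constantWidth}. Observe that for any admissible $g$ we have $1 + g'(x)^2 \geq 1$, so
\begin{align*}
 \frac{\pi^2}{d^2 \ell} \int_0^\ell \big(1 + g'(x)^2\big) \dd x \geq \frac{\pi^2}{d^2 \ell} \cdot \ell = \frac{\pi^2}{d^2}.
\end{align*}
Under the assumption $\ell \geq d$, we have $\pi^2/d^2 \geq \pi^2/\ell^2$, so the minimum on the right-hand side of \eqref{eq:mu2ConstantWidth} equals $\pi^2/\ell^2$. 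Hence \eqref{eq:jetztAberLos} follows immediately.

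For the equality statement, I would argue in two directions. First, if $\Omega$ is a rectangle with side lengths $\ell$ and $d$, $\ell \geq d$, then its Neumann eigenvalues are explicit and the lowest nonzero one equals $\min\{\pi^2/\ell^2, \pi^2/d^2\} = \pi^2/\ell^2$, so equality holds in \eqref{eq:jetztAberLos}. Conversely, suppose equality holds. Then, since $\pi^2/\ell^2$ realizes the minimum on the right-hand side of \eqref{eq:mu2ConstantWidth}, equality also holds in \eqref{eq:mu2ConstantWidth}. By the equality characterization in Theorem \ref{thm:constantWidth}, this forces $\Omega$ to be a rectangle.

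There is no real obstacle here: the corollary is essentially a one-line deduction using $\int_0^\ell(1+g'(x)^2)\dd x \geq \ell$ together with $\ell \geq d$, and the equality case is already packaged into Theorem \ref{thm:constantWidth}. The only minor point requiring care is to confirm that the equality condition in \eqref{eq:mu2ConstantWidth} transfers to our estimate, which is immediate once one notes that the first entry $\pi^2/\ell^2$ in the minimum is the one attained.
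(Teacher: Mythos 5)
Your proof is correct and is essentially the paper's own argument: the corollary is stated there as an immediate consequence of \eqref{eq:mu2ConstantWidth} with no further proof given. Your deduction --- the second entry of the minimum is at least $\pi^2/d^2 \geq \pi^2/\ell^2$ once $\ell \geq d$, so the right-hand side of \eqref{eq:mu2ConstantWidth} equals $\pi^2/\ell^2$ and the equality case reduces to the equality characterization already proved in Theorem \ref{thm:constantWidth} --- supplies exactly the intended details.
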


Finally, we point out that this corollary yields the inequality of Conjecture \ref{conj} when $\Omega$ is a domain, of the previous form, close to a sufficiently elongated rectangle. Let us give a more precise and quantitative statement.

\begin{proposition}\label{prop:NonConvexClass} For any $\rho\in(0,1)$, let $\mathcal A_\rho$ denote the set of domains satisfying Assumption \ref{ass}, with constant $d$, and for which, in addition,\[\frac{d}{\ell}\le \rho \mbox{\hspace{10pt} and \hspace{10pt} }\|g'\|_\infty\le M_\rho,\]
with
\[M_\rho:=\sqrt{(2-\rho)^2-1}.\]
Then, for any $\Omega\in\mathcal A_\rho$,
\begin{equation*}
 	L(\Omega)^2\mu_2(\Omega)<16\pi^2.
\end{equation*}
\end{proposition}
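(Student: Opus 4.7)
The plan is to combine Corollary~\ref{cor:Long} with a direct perimeter estimate, exploiting the fact that $M_\rho$ is calibrated so that $\sqrt{1+M_\rho^2}=2-\rho$. First, since the hypothesis $d/\ell\le\rho<1$ forces $\ell>d$, Corollary~\ref{cor:Long} applies and yields $\mu_2(\Omega)\le\pi^2/\ell^2$, with equality only for rectangles. Independently, because $d$ is constant we have $h=g+d$ and hence $h'=g'$, so the boundary of $\Omega$ consists of two vertical segments of length $d$ together with the graphs of $g$ and $h$ above $[0,\ell]$; using $|g'|\le M_\rho$ pointwise,
\[
 L(\Omega)=2d+2\int_0^\ell\sqrt{1+g'(x)^2}\,\dd x\le 2d+2\ell\sqrt{1+M_\rho^2}=2d+2\ell(2-\rho).
\]

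Multiplying the two bounds and invoking $d/\ell\le\rho$ then gives
\[
 L(\Omega)^2\mu_2(\Omega)\le\bigl(2d+2\ell(2-\rho)\bigr)^2\frac{\pi^2}{\ell^2}=4\pi^2\Bigl(\tfrac{d}{\ell}+2-\rho\Bigr)^2\le 16\pi^2,
\]
which is the target bound apart from strictness. To upgrade to a strict inequality, I would distinguish two cases. If $\Omega$ is not a rectangle, Corollary~\ref{cor:Long} already furnishes the strict bound $\mu_2(\Omega)<\pi^2/\ell^2$, and strictness propagates through the chain above. If instead $\Omega$ is a rectangle, then $g'\equiv 0$, so $L(\Omega)=2d+2\ell$ and
\[
 L(\Omega)^2\mu_2(\Omega)=4\pi^2\bigl(1+\tfrac{d}{\ell}\bigr)^2\le 4\pi^2(1+\rho)^2<16\pi^2
\]
because $\rho<1$.

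No step here is genuinely hard; the whole content of the proposition is that $M_\rho$ is defined precisely so that $\rho+(2-\rho)=2$ matches the borderline $16\pi^2=4\pi^2\cdot 2^2$ exactly, and strict inequality is then recovered either from the rectangular gap $\rho<1$ or, in the non-rectangular case, from the strict spectral bound of Corollary~\ref{cor:Long}. The only real care needed is in the case distinction, since each of the ingredient bounds on $L$, on $\mu_2$, and on $d/\ell$ can individually be saturated (for instance a linear $g$ with slope $\pm M_\rho$ saturates the perimeter estimate), so one must check that the two potential saturations cannot occur simultaneously.
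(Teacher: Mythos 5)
Your proof is correct and follows essentially the same route as the paper: bound $\mu_2(\Omega)$ by $\pi^2/\ell^2$ via Corollary~\ref{cor:Long}, bound the perimeter by $2d+2\ell\sqrt{1+M_\rho^2}=2d+2\ell(2-\rho)$, multiply, and recover strictness by noting that saturation of the spectral bound forces $\Omega$ to be a (non-square) rectangle, for which the product is strictly below $16\pi^2$. Your explicit case distinction at the end is just a slightly more spelled-out version of the paper's closing remark.
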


\begin{remark} For any $\rho\in(0,1)$, $\mathcal A_\rho$ mostly contains non-convex domains. Indeed, the only convex domains in $\mathcal A_\rho$ are rectangles.
\end{remark}

\begin{proof}[Proof of Proposition \ref{prop:NonConvexClass}] Let $\Omega$ be a set in $\mathcal A_\rho$. From Corollary \ref{cor:Long}, it follows that
\begin{align*}
L(\Omega)^2\mu_2(\Omega)&\le \frac{\pi^2}{\ell^2}\left(2d+2\int_0^\ell\sqrt{1+g'(x)^2}\,\textup{d}x\right)^2\\
&\le4\pi^2\left(\frac{d}{\ell}+\sqrt{1+\|g'\|_\infty^2}\right)^2\\
&\le 4\pi^2\left(\rho+\sqrt{1+M_\rho^2}\right)^2=16\pi^2.
\end{align*}
To obtain the strict inequality, we recall that equality in \eqref{eq:jetztAberLos} implies that $\Omega$ is a rectangle, in which case 
\[L(\Omega)^2\mu_2(\Omega)<16\pi^2,\]
since $\Omega$ is not a square. 
\end{proof}

\section{Perturbation approach}\label{sec:perturbation}

Let us conclude with some remarks concerning the behavior of the shape functional $\Omega\mapsto \mu_{2}(\Omega)L(\Omega)^2$ when $\Omega$ is a small perturbation of the unit square $\cQ=(0,1)^2$. We use here a Hadamard-type formula for the shape derivative of a Neumann eigenfunction (see \cite[Sec. 2.5.3]{H06} for a general discussion and \cite[p.\ 1596, Eq.\ (3.12)]{AH11} for the specific formula).  We are not attempting a full justification of its validity. Our goal is merely to check formally that we can find a suitable small perturbation of $\cQ$ into a non-convex domain $\Omega$ such that $\mu_2(\Omega)L(\Omega)^2>16\pi^2$ and we are therefore not overly concerning ourselves with regularity assumptions. 

In complement to this discussion, we recall that \cite {BBG16,HLL22,P08} provide sequences of non-convex domains along which the functional diverges to $+\infty$, as described in more detail in the introduction.

In general, we can deform $\cQ$ in the following way. We fix a smooth vector field \newline $\chi:\R^2 \to\R^2$ with compact support, and define the mapping $\Phi_t(x, y)= (x, y) + t \chi(x, y)$, depending on a real parameter $t$. It is easily checked that $\Phi_t$ is a $C^\infty$-diffeomorphism for $|t|$ small enough.  We then set $\Omega_t:=\Phi_t(\cQ)$ and $L(t):=L(\Omega_t)$. To avoid regularity issues, we assume that $\chi$ vanishes near the corners of $\cQ$. We have therefore reduced the problem to studying $F(t):=\mu_2(\Omega_t)L(t)^2$ for $t$ close to $0$.  

We first note that, according to classical differential geometry,
\[L'(0)=\int_{\partial\cQ}h(\chi\cdot\nu),\]
where $h$ is the curvature of $\partial\cQ$ and $\nu$ the unit normal vector to $\partial \cQ$, pointing outwards. Since $\partial \cQ$ is straight in the support of $\chi$, 
\begin{equation}\label{eq:perimeter}
	L'(0)=0.
\end{equation}

When writing the Hadamard formula, we have to account for the fact that $\mu_2(\cQ)=\mu_3(\cQ)=\pi^2$ is a double eigenvalue. We denote it by $\mu$ and recall that the functions
\begin{align*}	
	u_1(x,y):=&\sqrt2\cos(\pi x);\\
	u_2(x,y):=&\sqrt2\cos(\pi y);
\end{align*}
form an orthonormal basis of the associated eigenspace. Then, we can find two differentiable (indeed, real-analytic) functions  $t\mapsto\mu_1(t),\mu_2(t)$ satisfying the following.
\begin{enumerate}
\item For $|t|$ small enough, $\{\mu_1(t),\mu_2(t)\}=\{\mu_2(\Omega_t),\mu_3(\Omega_t)\}$ (note that the labeling of $\mu_1(t),\mu_2(t)$ does not necessarily coincide with their order in the Neumann spectrum of $\Omega_t$).
 \item The derivatives $\mu_1'(0)$ and $\mu_2'(0)$ are the eigenvalues of the $2\times2$ matrix
\[\left(
\begin{array}{cc}
	\int_{\partial\cQ}\left(|\nabla u_1|^2-\mu u_1^2\right)(\chi\cdot\nu)&\int_{\partial\cQ}\left(\nabla u_1\cdot\nabla u_2-\mu u_1u_2\right)(\chi\cdot\nu)\\
	\int_{\partial\cQ}\left(\nabla u_1\cdot\nabla u_2-\mu u_1u_2\right)(\chi\cdot\nu)&\int_{\partial\cQ}\left(|\nabla u_2|^2-\mu u_2^2\right)(\chi\cdot\nu)
\end{array}
\right),\]
which we denote by $M$.
\end{enumerate}
To carry on with our analysis, we write 
\[\partial\cQ=\Gamma_1\cup\Gamma_2\cup\Gamma_3\cup\Gamma_4,\]
with $\Gamma_1=[0,1]\times\{0\}$, $\Gamma_2=\{1\}\times[0,1]$, $\Gamma_3=[0,1]\times\{1\}$ and $\Gamma_4=\{0\}\times[0,1]$,
and we start imposing additional conditions on $\chi$. First, we assume that the support of $\chi$ intersects only one side of $\cQ$, say $\Gamma_1$, and that we have, on $\Gamma_1$, 
\[\chi(x,0)=(0,-f(x)),\]
with $f$ a non-negative smooth function supported in $(0,1)$, symmetric with respect to the midpoint $x=1/2$. 
 Using these hypotheses, and the explicit formulas for $u_1$ and $u_2$, we find
\[M=2\pi^2\left(
\begin{array}{cc}
	-\int_0^1\cos(2\pi x)f(x)\,dx&0\\
	0&-\int_0^1f(x)\,dx
\end{array}
\right).\]
If we make the additional assumption that $f$ is not identically $0$ and is supported in $(0,1/4)\cup(3/4,1)$, we find 
\begin{equation}\label{eq:matrix}
M=\left(
\begin{array}{cc}
	-\alpha_1&0\\
	0&-\alpha_2
\end{array}
\right),
\end{equation}
with $\alpha_2>\alpha_1>0$. Up to relabeling the functions $t\mapsto \mu_1(t),\ t \mapsto \mu_2(t)$, we can assume that $\mu_1'(0)=-\alpha_1$ and $\mu_2'(0)=-\alpha_2$.

Under the previous hypotheses on $\chi$, the above computations imply that the function $t\mapsto F(t)$ has a left derivative at $0$, given by
\[F'_-(0)=\mu_1'(0)L(0)^2+2\mu_1(0)L'(0)L(0)=-16\alpha_1<0.\]
Thus, we have $F(t)>F(0)=16\pi^2$ for $t$ negative and close enough to $0$. Since the vector field $\chi$, by construction, points outwards of $\cQ$, the corresponding deformation pushes the side $\Gamma_1$ inwards, making the domain $\Omega_t$ slightly non-convex.

\section*{Acknowledgments}

C.L.\ acknowledges support from the INdAM GNAMPA Project  \emph{Operatori differenziali e integrali in geometria spettrale} (CUP\_E53C22001930001). J.R.\ acknowledges support by the grant no.\ 2022-03342 of the Swedish Research Council (VR). The authors are grateful to Pier Domenico Lamberti and Richard S.~Laugesen for help with the references and to Antoine Henrot for helpful suggestions.


\end{document}